  \newtheorem{thm}{Theorem}[section]
  \newtheorem{lem}[thm]{Lemma}
  \newtheorem{prop}[thm]{Proposition}
  \newtheorem{cor}[thm]{Corollary}
  \theoremstyle{definition}
  \newtheorem{defn}[thm]{Definition}
  \newtheorem{exm}[thm]{Example}
  \newtheorem{rmk}[thm]{Remark}
 \newcommand\ra{\rightarrow}
 \newcommand\s{\subseteq}
 \numberwithin{equation}{section}
\begin{document}

\title{\bf Orthocomplete Pseudo $MV$-algebras }
\author{\bf Anatolij Dvure\v{c}enskij$^{^{1,2}}$, Omid Zahiri$^{^{3}}$\footnote{Corresponding author }\\
 {\small\em $^1$ Mathematical Institute,  Slovak Academy of Sciences, \v Stef\'anikova 49, SK-814 73 Bratislava, Slovakia} \\
{\small\em $^2$ Depart. Algebra  Geom.,  Palack\'{y} Univer.17. listopadu 12,
CZ-771 46 Olomouc, Czech Republic} \\
{\small\em  $^3$University of Applied Science and Technology, Tehran, Iran}\\
{\small\em dvurecen@mat.savba.sk\quad  om.zahiri@gmail.com} \\
%\small{\tt Dedicat to Varg Vikernes on the occasion of his 41$^{th}$ birthday.}
}
\date{}
\maketitle
\begin{abstract}
Pseudo $MV$-algebras are a non-commutative generalization of $MV$-algebras.
The main purpose of the paper is to introduce and investigate orthocomplete pseudo $MV$-algebras. We use the concepts of projectable pseudo $MV$-algebras and large pseudo $MV$-subalgebras to introduce orthocomplete pseudo $MV$-algebras. Then we apply a generalization of the Mundici's functor to an orthocompletion of an representable $\ell$-group to prove that each representable pseudo $MV$-algebra has an orthocompletion. In particular, our results are valid also for $MV$-algebras.
\end{abstract}

{\small {\it AMS Mathematics Subject Classification (2010):}  06F15 , 06F20, 06D35. }

{\small {\it Keywords:} Pseudo $MV$-algebra, $MV$-algebra, $\ell$-group, Unital $\ell$-group, Orthocompletion, Essential extension, Boolean element, Projectable pseudo $MV$-algebra. }

{\small {\it Acknowledgement:} This work was supported by  the Slovak Research and
Development Agency under contract APVV-0178-11 and  grant VEGA No. 2/0059/12 SAV, and
GA\v{C}R 15-15286S.}

\section{ Introduction }

In \cite{Bernau},  Bernau  introduced the concept of the orthocompletion of a lattice-ordered group (= an $\ell$-group) and proved that each representable
$\ell$-group has a unique orthocompletion. The definition was clarified in \cite{Conrad1}, where a simpler construction of the orthocompletion was given. It was shown that this construction is essentially a direct limit of cardinal products of quotients by polars.
Ball \cite{Ball}  has generalized these notions to the non-representable case. Another construction for the orthocompletion of $\ell$-groups can be found in \cite{Bleier}.

$MV$-algebras were defined by Chang, \cite{Cha}, as an algebraic counterpart of many-valued reasoning. The principal result of theory of $MV$-algebras is a representation theorem by Mundici \cite{Mun} saying that there is a categorical equivalence between the category of MV-algebras and the category of unital Abelian $\ell$-groups. Today theory of $MV$-algebras is very deep and has many interesting connections with other parts of mathematics with many important applications to different areas. For more details on $MV$-algebras, we recommend the monograph \cite{mv-book}.

In the last period, there appeared also two non-commutative generalizations of $MV$-algebras, called pseudo $MV$-algebras by \cite{Georgescu}, or equivalently, $GMV$-algebras by \cite{Rac}. They can be represented as an interval in  unital $\ell$-groups (not necessarily Abelian) as it follows from the fundamental result of Dvure\v{c}enskij \cite{151} which generalizes the Mundici representation theorem of $MV$-algebras.

Non-commutative operations, for example multiplication of matrices, are well known both in mathematics and physics and their applications. In particular, the class of square  matrices of the form
$$ A(a,b)=
\left( \begin{array}{cc}
a & b \\
0& 1
\end{array}
\right)
$$
for $a>0$, $b \in (-\infty,\infty)$ with usual multiplication of matrices is a non-commutative linearly ordered group with the neutral element $A(1,0)$ and with the positive cone consisting of matrices $A(a,b)$ with $a>1$ or $a=1$ and $b\ge0$. It gives an example of a pseudo $MV$-algebra. We note that $A(a,b)$ is an extension of real numbers: If $b=0$, then $A(a,0)$ is a positive real number and if $b\ne 0$, then $A(a,b)$ denotes some kind of a generalized number (non-standard number) such that $A(a,b)$ is infinitely close to $A(a,0)$ but bigger than $A(a,0)$, and similarly if $b<0$, then $A(a,b)$ is also infinitely closed to $A(a,0)$ but smaller than $A(a,0)$, \cite{Haj}.

Pseudo $MV$-algebras generalize $MV$-algebras, and in contrast to $MV$-algebras, not every pseudo $MV$-algebra is a subdirect product of linearly ordered pseudo $MV$-algebras. Pseudo $MV$-algebras are an algebraic counterpart of non-commutative reasoning.  Representable pseudo $MV$-algebras are those that are a subdirect product of linearly ordered pseudo $MV$-algebras. In \cite{DvuS}, it was shown that the class of representable pseudo $MV$-algebras is a variety. Since a pseudo $MV$-algebra is linearly ordered iff its representing unital $\ell$-group is linearly ordered, every representable pseudo $MV$-algebra is in a one-to-one correspondence with representable unital $\ell$-groups.

In \cite{Jakubik}, Jakub\' ik defined projectable $MV$-algebras, retracts and retract mappings and investigate
the relation between retract mappings of a projectable $MV$-algebra and retract mappings of its corresponding lattice ordered group. In \cite{Jak2001}, he studied a direct product decomposition of pseudo $MV$-algebras.

Recently, Ledda et al. \cite{LPT}, considered the projectability property as a lattice-theoretic property for more general classes of algebras of logic. For a class of integral residuated lattices that includes Heyting algebras and representable residuated lattices, they proved that an algebra of such a class is projectable iff the order dual of each subinterval $[a, 1]$ is a Stone lattice. In particular, they showed that a pseudo $MV$-algebra is projectable iff its bounded lattice reduct can be endowed with a G\"{o}del implication. %They also investigated the variety of G\"{o}del residuated lattices and showed that it is a semi-linear variety.

In our contribution, we continue in this research studying projectable pseudo $MV$-algebra. Our aim is to describe the orthocompletion of representable pseudo $MV$-algebras in an analogy with orthocompletion of $\ell$-groups.

In the present paper, we introduce summand-ideals of pseudo $MV$-algebras in order to study orthocomplete pseudo $MV$-algebras. We present a relation between an essential extension and an orthocompletion of a representable pseudo $MV$-algebra $A$ and its representation unital $\ell$-group $(G_A,u_A)$. We show that if $A$ is a large subalgebra of a pseudo $MV$-algebra $B$, then the intersection of all projectable pseudo $MV$-subalgebras of $B$ containing $A$ is a projectable pseudo $MV$-algebra. Then we use the orthocompletion of representable unital $\ell$-groups for representable pseudo $MV$-algebras in order to show that any representable pseudo $MV$-algebras have an orthocompletion. Finally, we give some results and use the orthocompletion of a representable pseudo $MV$-algebra $A$ to obtain a minimal strongly projectable essential extension for the pseudo $MV$-algebra $A$, which is the intersection of all projectable pseudo $MV$-subalgebra of $O(A)$ (the orthocompletion of $A$) containing $A$.

%%%%%%%%%%%%%%%%%%%%%%%%%%%%%%%%%%%%%%%%%%%%%%%%%%%%%%%%%%%%%%%%%%%%%%%%%%
%%%%%%%%%%%%%%%%%%%%%%%%%%%%%%%%%%%%%%%%%%%%%%%%%%%%%%%%%%%%%%%%%%%%%%%%%%
\section{ Preliminaries}

In the section, we gather some basic notions relevant to $MV$-algebras and
$\ell$-groups which will be needed in the next section. For more details, we recommend to consult papers \cite{Anderson, Darnel} for theory of $\ell$-groups and \cite{mv-book, Georgescu} for $MV$-algebras and pseudo $MV$-algebras.

\begin{defn}\cite{Anderson}
A group $(G;+,0)$ is said to be {\it partially ordered} if it is equipped with a partial order relation $\leq$ which is compatible with $+$, that is,
$a\leq b$ implies that $x+a+y\leq x+b+y$ for all $x,y\in G$. An element $x\in G$ is called {\it positive} if $0\leq x$.
The partially ordered group $(G;+,0)$ is called a {\it lattice ordered group} or simply an $\ell$-{\it group}
if $G$ with its partially order relation is a lattice.
Any lattice ordered group satisfies the following properties (see \cite{Anderson, Darnel}):
\begin{itemize}
\item[($\ell$i)] $x+(y\vee z)=(x+y)\vee (x+z)$ and $x+(y\wedge z)=(x+y)\wedge (x+z)$;

\item[($\ell$ii)] $-(x\vee y)=-x\vee -y$ and $-(x\wedge y)=-x\wedge -y$;

\item[($\ell$iii)] for positive elements $x,y$ and $z$, $x\wedge(y+z)\leq (x\wedge y)+(x\wedge z)$.
\end{itemize}
\end{defn}

Let $(G;+,0)$ be an $\ell$-group. A normal convex $\ell$-subgroup of $G$ is called an $\ell$-{\it ideal}.
For each $g\in G$, the absolute value $|g|$ of $g$ is $g^{+}+g^{-}$, where $g^{+}=g\vee 0$ and $g^{-}=-g\vee 0$. The absolute
value satisfies a weakened triangle inequality:

(WTI) $|x+y|\leq |x|+|y|+|x|$.

We call a convex $\ell$-subgroup $C$ of $G$ satisfying the condition $C^{\bot\bot}=C$ a {\it polar subgroup} of $G$ and we
denote the collection of such by $\rho(G)$, where $C^{\bot_{G}}$ or simply $C^{\bot}$ is a unique maximal convex $\ell$-subgroup
for which $C\cap C^\bot=\{0\}$. It is clear that $\rho(G)$ is a Boolean algebra (see \cite{Anderson}).

We remind that an $\ell$-group $G$ is {\it representable} if it is a subdirect product of linearly ordered groups. Representable $\ell$-groups form a variety.

If an $\ell$-group $(G;+,0)$ is an $\ell$-subgroup of an $\ell$-group $(H;+,0)$, we write $G\leq H$.

An element $u$ of an $\ell$-group $(G;+,0)$ is called a {\it strong unit} (or an order unit) if, for each $x\in G$, there exists $n\in \mathbb{N}$ such
that $x\leq nu$. A {\it unital} $\ell$-group is a couple $(G,u)$, where $G$ is an $\ell$-group with a fixed strong unit $u$.

According to \cite{Georgescu}, a {\it pseudo MV-algebra} is an algebra $(M;
\oplus,^-,^\sim,0,1)$ of type $(2,1,1,$ $0,0)$ such that the
following axioms hold for all $x,y,z \in M$ with an additional
binary operation $\odot$ defined via $$ y \odot x =(x^- \oplus y^-)
^\sim $$
\begin{enumerate}
\item[{\rm (A1)}]  $x \oplus (y \oplus z) = (x \oplus y) \oplus z;$

\item[{\rm (A2)}] $x\oplus 0 = 0 \oplus x = x;$

\item[{\rm (A3)}] $x \oplus 1 = 1 \oplus x = 1;$

\item[{\rm (A4)}] $1^\sim = 0;$ $1^- = 0;$

\item[{\rm (A5)}] $(x^- \oplus y^-)^\sim = (x^\sim \oplus y^\sim)^-;$

\item[{\rm (A6)}] $x \oplus (x^\sim \odot y) = y \oplus (y^\sim
\odot x) = (x \odot y^-) \oplus y = (y \odot x^-) \oplus
x;$\footnote{$\odot$ has a higher priority than $\oplus$.}

\item[{\rm (A7)}] $x \odot (x^- \oplus y) = (x \oplus y^\sim)
\odot y;$

\item[{\rm (A8)}] $(x^-)^\sim= x.$
\end{enumerate}

For example, if $u$ is a strong unit of a (not necessarily Abelian)
$\ell$-group $G$,
$$\Gamma(G,u) := [0,u]
$$
and
\begin{eqnarray*}
x \oplus y &:=&
(x+y) \wedge u,\\
x^- &:=& u - x,\\
x^\sim &:=& -x +u,\\
x\odot y&:= &(x-u+y)\vee 0,
\end{eqnarray*}
then $(\Gamma(G,u);\oplus, ^-,^\sim,0,u)$ is a pseudo $MV$-algebra
\cite{Georgescu}.

(A6) defines the join $x\vee y$ and (A7) does the meet $x\wedge y.$ In addition, $M$ with respect to $\vee$ and $\wedge$ is a distributive lattice, \cite{Georgescu}.

A pseudo $MV$-algebra is an $MV$-{\it algebra} iff $x\oplus y=y\oplus x$ for all
$x,y \in M$. We note that if $x^-=x^\sim$ for each $x\in M$, $M$ is said to be {\it symmetric}. We note that a symmetric pseudo $MV$-algebra is not necessarily an $MV$-algebra.

%We say that an $MV$-{\it algebra} is an algebra $(M;\oplus,',0)$ of type $(2,1,0)$, where $(M;\oplus,0)$ is a
%commutative monoid with neutral element $0$ and for all $x,y\in M$:
%\begin{enumerate}
%	\item[(i)] $x''=x$;
	
%	\item[(ii)] $x\oplus 1=1$, where $1=0'$;
	
%	\item[(iii)] $x\oplus (x\oplus y')'=y\oplus (y\oplus x')'$.
%\end{enumerate}

In addition, let $x \in M$. For any integer $n\ge 0$, we set
$$0.x=0,\quad 1.x=1,\quad n.x=(n-1).x\oplus x, \ n\ge 2,$$
and
$$x^0=1,\quad x^1 =1, \quad x^n=x^{n-1}\odot x,\ n\ge 2.$$

An element $a\in M$ is called a {\it Boolean element} if $a\oplus a=a$, or equivalently, $a\odot a=a$; we denote by $B(M)$ the set of Boolean elements of $M$; it is a Boolean algebra that is a subalgebra of $M$. The following conditions are equivalent: (i) an element $a\in M$ is a Boolean element, (ii) $a\wedge a^- =0$, (iii) $a\wedge a^\sim= 0$, \cite[Prop. 4.2]{Georgescu}. In addition, if $a\in B(M)$, then $a^-=a^\sim$ and therefore, we put $a':=a^-$, and $a\oplus x = a\vee x = x\oplus a$ for each $x \in M$, see \cite[Prop. 4.3]{Georgescu}.

A non-empty subset $I$ of a pseudo $MV$-algebra $M$ is called an {\it ideal} of $M$ if $I$ is a down set which is closed under $\oplus$. An ideal $I$ of $M$ is said to be (i) {\it prime} if $x\wedge y \in I$ implies $x \in I$ or $y \in I$, and (ii) {\it normal} if $x\oplus I=I\oplus x$ for any $x \in M$, where $x\oplus I:=\{x\oplus i \mid i \in I\}$ and $I\oplus x =\{i\oplus x \mid i \in I\}$. Two equivalent conditions, \cite[Thm 2.17]{Georgescu}, to be an ideal $I$ prime are: (i) $x\odot y^- \in I$ or $y\odot x^-$ for all $x,y \in M$, (ii) $x\odot y^\sim \in I$ or $y\odot x^\sim$ for all $x,y \in M$.

If $X$ is a subset of a pseudo $MV$-algebra $M$, we denote (i) by $\langle X\rangle_n$ the normal ideal of $M$ generated by $X$, and (ii) a {\it polar} of $X$, i.e. the set $X^{\bot_M}:=\{y \in M\mid x\wedge y =0, \forall\, x \in X\}$, or simply we put $X^\bot:= X^{\bot_M}$. If $X=\{a\}$, we put $a^\bot:=\{a\}^\bot$.

An ideal $I$ of $M$ is called a {\it polar ideal}  if $I^{\bot_{M}\bot_{M}}=I$. It can be easily seen that a subset $I$ of $M$ is a polar ideal if and only if
$I=\{a\in M\mid a\wedge x=0, \forall x\in X\}$ for some subset $X$ of $M$. The set of polar ideals of $M$ is denoted by
$\rho(M)$.

There is a one-to-one relationship between congruences and normal ideals of a pseudo $MV$-algebra, \cite[Cor. 3.10]{Georgescu}: If $I$ is a normal ideal of a pseudo MV-algebra, then $x\sim_I y$ iff $x\odot y^-, y\odot x^-\in I$ is a congruence, and $M/I$ is a pseudo $MV$-algebra. Conversely, if $\sim$ is a congruence on $M$, then $I_\sim =\{x \in M \mid x\sim 0\}$ is a normal ideal such that $\sim_{I_\sim}=\sim$.

A pseudo $MV$-algebra $M$ is {\it representable} if $M$ is a subdirect product of a system of linearly ordered pseudo $MV$-algebras. By \cite[Thm 6.8]{DvuS}, the class of representable pseudo $MV$-algebras is a variety, and by \cite[Prop. 6.9]{DvuS}, $M$ is representable iff $a^\bot$ is a normal ideal of $M$ for each $a \in M$.

If a pseudo $MV$-algebra $(M;\oplus,^-,^\sim,0,1)$ is a subalgebra of a pseudo $MV$-subalgebra $(N;\oplus,^-,^\sim,0,1)$, we write $M\leq N$.

It is well known that according to Mundici, \cite{Mun}, there is a close connection between unital Abelian $\ell$-groups $(G,u)$ and $MV$-algebras. An analogous result holds for pseudo $MV$-algebras which was established by Dvure\v{c}enskij in \cite{151}. It says that, for each pseudo $MV$-algebra, there is unique (up to isomorphism) unital $\ell$-group $(G,u)$ such that $M \cong \Gamma(G,u)$. Moreover, there is a categorical equivalence between the category of unital $\ell$-groups and the category of pseudo MV-algebras: Let $\mathcal{UG}$ be the class of unital $\ell$-groups whose objects are unital $\ell$-groups $(G,u)$ and morphisms between objects are $\ell$-group homomorphisms preserving fixed strong units. We denote by $\mathcal{PMV}$ the class of pseudo MV-algebras whose objects are pseudo MV-algebras and morphisms are homomorphisms of pseudo MV-algebras. Then $\mathcal{UG}$ and $\mathcal{PMV}$ are categories. The generalized Mundici's functor $\Gamma:\mathcal{UG}\to \mathcal{PMV}$ is defined as follows
$$
\Gamma(G,u)=(\Gamma(G,u);\oplus, ^-,^\sim,0,u)
$$
and if $h:(G,u)\to (H,v)$ is a morphism, then

$$
\Gamma(h) = h_{|[0,u]}.
$$
It is important to note that $\mathcal{PMV}$ is a variety whereas $\mathcal {UG}$ not because it is not closed under infinite products.

Moreover, there is another functor from the category of pseudo $MV$-algebras to $\mathcal{UG}$ sending a pseudo $MV$-algebra $M$ to a unital $\ell$-group $(G,u)$ such that $M \cong \Gamma(G,u)$ which is
denoted by $\Xi:\mathcal{PMV}\ra \mathcal{UG}$. For more details relevant to these functors, please see \cite{151}.

\begin{thm}{\rm\cite{151}}\label{functor}
The composite functors $\Gamma\Xi$ and $\Xi\Gamma$ are naturally equivalent to the identity functors of
$\mathcal{PMV}$ and $\mathcal{UG}$, respectively. Therefore, $\mathcal{UG}$ and $\mathcal{PMV}$ are categorically equivalent.

In addition, if $h:\ \Gamma(G,u) \to \Gamma(G',u')$ is a
morphism of pseudo $MV$-algebras, then there is a unique homomorphism of unital $\ell$-groups
$f:\ (G,u) \to (G',u')$  such that $h =\Gamma(f)$, and
\begin{enumerate}
\item[{\rm (i)}] if $h$ is surjective, so is $f$;
 \item[{\rm (ii)}] if $h$ is  injective, so is $f$.
\end{enumerate}
\end{thm}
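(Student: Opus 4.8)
I would establish the statement in four stages: (1)~$\Gamma\colon\mathcal{UG}\to\mathcal{PMV}$ is a functor; (2)~every pseudo $MV$-algebra $M$ is realized as $\Gamma(G_M,u_M)$ for some unital $\ell$-group; (3)~every morphism of pseudo $MV$-algebras lifts \emph{uniquely} along $\Gamma$, which simultaneously produces the functor $\Xi$, the natural equivalences $\Gamma\Xi\cong\mathrm{id}_{\mathcal{PMV}}$ and $\Xi\Gamma\cong\mathrm{id}_{\mathcal{UG}}$, and the uniqueness clause of~(ii); (4)~the lifted homomorphism inherits surjectivity and injectivity. Stage~1 is routine: a unital $\ell$-homomorphism $h\colon(G,u)\to(H,v)$ maps $[0,u]$ into $[0,v]$ and commutes with $+,-,\vee,\wedge$, hence with $\oplus,{}^-,{}^\sim,\odot$, so $\Gamma(h)=h_{|[0,u]}$ is a morphism of $\mathcal{PMV}$, and identities and composites are visibly preserved.

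For stage~2 I would use Mundici's good-sequence technique \cite{Mun} adapted to the non-commutative case. Call a sequence $(a_1,\dots,a_n,0,0,\dots)$ over $M$ \emph{good} if $a_i\oplus a_{i+1}=a_i$ for all $i$, and equip the set $\mathcal{G}(M)$ of good sequences with the combinatorial addition that imitates carrying when a positive element of an $\ell$-group is written as a sum of members of $[0,u]$. Using the distributivity of the lattice reduct together with (A6) and (A7), one shows that $\mathcal{G}(M)$ is a cancellative monoid with the Riesz decomposition property; by the Grothendieck construction it is then the positive cone of an $\ell$-group $G_M$ with strong unit $u_M$ the length-one sequence $(1)$, and $a\mapsto(a)$ defines an isomorphism $\varphi_M\colon M\to\Gamma(G_M,u_M)$. \emph{This is the technical core, and the step I expect to cost the most work:} cancellativity and interpolation for $\mathcal{G}(M)$ demand careful bookkeeping with the two negations ${}^-$ and ${}^\sim$ and repeated use of the lattice identities valid in $M$.

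For stage~3, given $h\colon\Gamma(G,u)\to\Gamma(G',u')$ I first note that $[0,u]$ generates $G$ as an $\ell$-group: for $g\ge 0$ pick $n$ with $g\le nu$ and iterate the identity $g=(g\wedge u)+\bigl((g-u)\vee 0\bigr)$ to write $g$ as a finite sum of elements of $[0,u]$, while a general $g$ equals $g^+-g^-$; hence a lift of $h$ is unique. For existence, define $f$ on $G^+$ by sending a positive element with good-sequence normal form $(a_1,\dots,a_n)$ to $h(a_1)+\dots+h(a_n)\in G'$; using the description of addition and joins of good sequences together with the fact that $h$ respects $\oplus,\vee,\wedge$ on the interval, one checks that $f$ is a monoid homomorphism preserving finite joins, and hence extends via $f(g)=f(g^+)-f(g^-)$ to a unital $\ell$-homomorphism $f\colon(G,u)\to(G',u')$ with $\Gamma(f)=h$. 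Declaring $\Xi(g)$, for a morphism $g\colon M\to N$, to be the unique lift of $\varphi_N\cc g\cc\varphi_M^{-1}$, the functoriality of $\Xi$ and the naturality identities $\Gamma(\Xi(g))\cc\varphi_M=\varphi_N\cc g$ follow from uniqueness of lifts, which yields $\Gamma\Xi\cong\mathrm{id}_{\mathcal{PMV}}$ via the $\varphi_M$; lifting $\varphi_{\Gamma(G,u)}$ together with its inverse and invoking uniqueness once more shows those lifts are mutually inverse isomorphisms, so $\Xi\Gamma\cong\mathrm{id}_{\mathcal{UG}}$.

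Finally, stage~4. If $h$ is surjective then $f(G)\supseteq h([0,u])=[0,u']$, and $[0,u']$ generates $G'$, so $f$ is onto. If $h$ is injective, then $\ker f$ is a convex $\ell$-subgroup of $G$; were it nonzero it would contain some $g>0$, and then $g\wedge u>0$: otherwise $g\wedge nu=0$ for all $n$ by ($\ell$iii), while $g\le nu$ for some $n$, forcing $g=0$. By convexity $g\wedge u\in\ker f\cap[0,u]$, hence $h(g\wedge u)=f(g\wedge u)=0$, contradicting the injectivity of $h$. Therefore $\ker f=\{0\}$ and $f$ is injective.
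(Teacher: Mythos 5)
The paper does not actually prove this theorem: it is imported verbatim from \cite{151}, so the only meaningful comparison is with the proof given there. Your plan takes a genuinely different route. In \cite{151} the representation $M\cong\Gamma(G_M,u_M)$ is \emph{not} obtained by generalizing Mundici's good sequences; instead one shows that every pseudo $MV$-algebra is a ``brick'' in the sense of Bosbach and invokes Bosbach's theorem that every brick is the unit interval of a unital $\ell$-group, after which functoriality, uniqueness of the lift, and clauses (i)--(ii) are derived much as in your stages 3 and 4. Those outer parts of your plan are essentially sound: uniqueness of $f$ because $[0,u]$ generates $G$, surjectivity because $[0,u']$ generates $G'$, and injectivity via $g\wedge u>0$ for $0<g\in\ker f$ (using ($\ell$iii), the strong unit, and convexity) are all correct. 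One repair: in a non-commutative $\ell$-group the decomposition you iterate must be written $g=\bigl((g-u)\vee 0\bigr)+(g\wedge u)$; the form $g=(g\wedge u)+\bigl((g-u)\vee 0\bigr)$ is the Abelian identity and in general yields only a conjugate of $g$. This does not damage the generation argument, but it signals where commutativity creeps in.

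The genuine gap is stage 2, which is where the entire content of the theorem sits, together with the existence half of stage 3 that depends on it. Two steps there are not ``careful bookkeeping.'' First, the carrying addition on good sequences is order-sensitive once $\oplus$ is non-commutative; the verification that it is well defined, associative, and closed on good sequences is exactly the place where Mundici's proof uses commutativity, and you do not say which of the several possible non-commutative formulas you would adopt nor why it would be associative. Second, ``by the Grothendieck construction it is then the positive cone of an $\ell$-group'' is not available: the Grothendieck group is an Abelian-group device, a cancellative non-commutative monoid need not embed in any group at all (Mal'cev), and even when it does, obtaining the lattice order requires a cone theorem (Ore-type conditions plus Birkhoff's characterization of positive cones of $\ell$-groups), none of which is addressed. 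As written, the plan establishes the easy shell of the theorem while asserting its core --- which is precisely the difficulty that \cite{151} circumvents by routing the construction through Bosbach's representation rather than through good sequences.
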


Therefore, a pseudo $MV$-algebra $\Gamma(G,u)$ is representable iff an $\ell$-group $G$ is representable.

A relation between some ideals of pseudo $MV$-algebras and some convex subgroups of unital $\ell$-groups is as follows; its $MV$-variant was established in \cite{Cignoli}:

\begin{thm}\label{ideals}{\rm \cite[Thm. 6.1]{DvuS}}
Let $(G,u)$ be a unital $\ell$-group. The map $\Phi: J\mapsto  \{x\in G\mid |x|\wedge u\in J\}$
defines an isomorphism from the poset of normal ideals of $\Gamma(G,u)$ onto the poset of $\ell$-ideals of $G$.
The inverse isomorphism is given by the map $\Psi:H\mapsto H\cap [0,u]$. Furthermore, let $Spec(G)$ be the set of all
proper prime $\ell$-ideals of $G$ and $Spec(\Gamma(G,u))$ be the set of prime ideals of $\Gamma(G,u)$. Then
$(Spec(\Gamma(G,u)),\s)\cong (Spec(G,u),\s)$. Moreover, the maps $\Phi$ and $\Psi$ define a one-to-one relations between ideals of $\Gamma(G,u)$ and convex subgroups of $G$.
\end{thm}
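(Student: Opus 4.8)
The plan is to check that $\Phi$ and $\Psi$ are well-defined, inclusion-preserving, and mutually inverse, and then to read off the statements about prime ideals and about arbitrary (not necessarily normal) ideals as by-products of the same construction. Throughout write $M=\Gamma(G,u)$.

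First I would show that for a normal ideal $J$ of $M$ the set $\Phi(J)$ is a normal convex $\ell$-subgroup of $G$. Closure under $x\mapsto-x$ is clear since $|{-x}|=|x|$; for closure under $+$, given $x,y\in\Phi(J)$ I would use the weakened triangle inequality $|x+y|\le|x|+|y|+|x|$ together with property ($\ell$iii) to obtain
\[ |x+y|\wedge u\ \le\ \bigl(|x|+|y|+|x|\bigr)\wedge u\ \le\ \bigl((|x|\wedge u)+(|y|\wedge u)+(|x|\wedge u)\bigr)\wedge u\ =\ (|x|\wedge u)\oplus(|y|\wedge u)\oplus(|x|\wedge u)\ \in\ J, \]
so $x+y\in\Phi(J)$ because $J$ is a down-set; convexity is immediate ($0\le z\le y\in\Phi(J)$ gives $|z|\wedge u=z\wedge u\le y\wedge u\in J$), and closure under $z\mapsto z^{+}$ follows from $0\le z^{+}\le|z|$, which makes $\Phi(J)$ a sublattice. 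For \emph{normality} the cleanest route is to avoid conjugation identities entirely: by Theorem~\ref{functor} the quotient map $M\to M/J$ lifts to an $\ell$-homomorphism $f\colon(G,u)\to\Xi(M/J)$, and since $f(u)$ is a strong unit one has $f(x)=0\Leftrightarrow f(|x|)\wedge f(u)=0\Leftrightarrow|x|\wedge u\in J$ (the first equivalence because $f(|x|)\le n\,f(u)$ forces $f(|x|)\le n(f(|x|)\wedge f(u))$, the second because $f(|x|\wedge u)=f(|x|)\wedge f(u)$ and $f$ restricted to $[0,u]$ is the quotient map); thus $\Phi(J)=\ker f$, a normal convex $\ell$-subgroup. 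Dually, for an $\ell$-ideal $H$ of $G$ the set $\Psi(H)=H\cap[0,u]$ is a down-set of $M$ closed under $\oplus$ (convexity of $H$ gives $0\le(a+b)\wedge u\le a+b\in H$), and it is a normal ideal, being the $0$-class of the pseudo $MV$-congruence obtained by restricting to $[0,u]$ the $\ell$-congruence associated with $H$.

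That $\Phi$ and $\Psi$ are inverse maps I would check as follows. The equality $\Psi\Phi(J)=\Phi(J)\cap[0,u]=\{x\in[0,u]\mid x\wedge u\in J\}=J$ is bookkeeping ($|x|=x$ and $x\wedge u=x$ for $0\le x\le u$, and $J\subseteq[0,u]$). For $\Phi\Psi(H)=H$: the inclusion $H\subseteq\Phi\Psi(H)$ holds because $x\in H$ forces $|x|\in H$ and then $0\le|x|\wedge u\le|x|$ returns $|x|\wedge u$ to $H$ by convexity; the reverse inclusion is the only place where the strong unit is essential — if $|x|\wedge u\in H$ and $|x|\le nu$, then iterated ($\ell$iii) gives $|x|=|x|\wedge nu\le n(|x|\wedge u)\in H$, so $|x|\in H$ and then $x\in H$ by convexity. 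Since both maps obviously respect inclusions, they are inverse poset isomorphisms between the normal ideals of $M$ and the $\ell$-ideals of $G$. Moreover, nothing in the well-definedness arguments beyond the word ``normal'' actually used normality (for arbitrary ideals one uses the computational form of well-definedness of $\Phi$ instead of the kernel description), so the same maps restrict to an inclusion-preserving bijection between all ideals of $M$ and all convex $\ell$-subgroups of $G$.

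Finally, for the spectrum I would note that $\Phi(J)$ is proper iff $u=u\wedge u\notin J$ iff $J$ is proper, and that $J\subseteq\Phi(J)$ while $\Psi(H)\subseteq H$; then, using the characterization of a prime $\ell$-ideal $H$ (i.e.\ $a\wedge b\in H$ with $a,b\ge0$ forces $a\in H$ or $b\in H$) and the definition of a prime ideal of $M$, a direct check shows that $J$ is prime in $M$ iff $\Phi(J)$ is prime in $G$ — for the nontrivial direction, given $0\le a,b\in G$ with $a\wedge b\in\Phi(J)$, the elements $a\wedge u,\,b\wedge u$ lie in $M$ and satisfy $(a\wedge u)\wedge(b\wedge u)=(a\wedge b)\wedge u\in J$, so one of them lies in $J$. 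Hence the poset isomorphism restricts to $(Spec(\Gamma(G,u)),\s)\cong(Spec(G,u),\s)$. The genuinely delicate points are only (a) obtaining normality of $\Phi(J)$ cleanly, which is why I would identify it with the kernel supplied by Theorem~\ref{functor} rather than manipulate conjugates by hand, and (b) being scrupulous about convexity — it is precisely convexity that lets one intersect repeatedly with $u$, an element lying in none of the proper subgroups in play, without leaving the subgroup.
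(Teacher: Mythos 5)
The paper does not prove this statement: it is imported verbatim from \cite[Thm.~6.1]{DvuS} (its $MV$-ancestor being \cite{Cignoli}), so there is no internal proof to compare against. Judged on its own, your argument is correct and is essentially the standard proof of this correspondence: well-definedness of $\Phi$ via the weakened triangle inequality and ($\ell$iii), well-definedness of $\Psi$ via convexity, $\Psi\Phi=\mathrm{id}$ by bookkeeping on $[0,u]$, and $\Phi\Psi=\mathrm{id}$ using the strong unit through $|x|=|x|\wedge nu\le n(|x|\wedge u)$ --- you correctly isolate this as the only place where the strong unit is indispensable. The one point where you deviate from the usual computation is the normality of $\Phi(J)$: the classical proofs verify closure under conjugation directly, whereas you realize $\Phi(J)$ as the kernel of the $\ell$-group homomorphism lifting the quotient map $M\to M/J$ supplied by Theorem~\ref{functor}. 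That is legitimate here (the lifting theorem is available and logically prior to this statement) and buys a cleaner argument, at the cost of invoking the representation theorem for $M/J$; it also forces you, as you note, to fall back on the computational form of well-definedness when extending the bijection to non-normal ideals and arbitrary convex $\ell$-subgroups, which indeed goes through because neither the $\oplus$-closure estimate $|x+y|\wedge u\le(|x|\wedge u)\oplus(|y|\wedge u)\oplus(|x|\wedge u)$ nor the mutual-inverse computations use normality. The remaining details (disjointness of $x^{+}$ and $x^{-}$ giving $|{-x}|=|x|$, the prime correspondence via $(a\wedge u)\wedge(b\wedge u)=(a\wedge b)\wedge u$, properness detected by whether $u\in J$) are all in order.
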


%%%%%%%%%%%%%%%%%%%%%%%%%%%%%%%%%%%%%%%%%%%%%%%%%%%%%%%%%%%%%%%%%%%%%%%%%%%%%%%%%
%%%%%%%%%%%%%%%%%%%%%%%%%%%%%%%%%%%%%%%%%%%%%%%%%%%%%%%%%%%%%%%%%%%%%%%%%%%%%%%%%
\section{ {\bf Summand-Ideals of Pseudo $MV$-algebras}} %Main Results} }

In the section we present summand-ideals and we show their close connection with polars.

From now on, unless otherwise stated, we will assume that $(M;\oplus,^-,^\sim,0,1)$ or simply $M$ is a pseudo $MV$-algebra and for each subsets $X$ and $Y$ of $M$,
$X\oplus Y=\{x\oplus y\mid (x,y)\in X\times Y\}$. We start with the following useful two lemmas.

\begin{lem}\label{le:1}
Let $A$ and $B$ be normal ideals of a pseudo $MV$-algebra $M$. Then

\begin{eqnarray*}
\langle A\cup B\rangle_n &=&\{x\in M \mid x\le a\oplus b \mbox{ for some } a\in A,\ b \in B\}\\
& =& \{x\in M \mid x= a\oplus b \mbox{ for some } a\in A,\ b \in B\}\\
&=& A \oplus B.
\end{eqnarray*}
\end{lem}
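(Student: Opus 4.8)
The plan is to prove the claimed identities by disposing of the easy inclusions first and then concentrating on the single genuine step. Write $S_{1}=\langle A\cup B\rangle_{n}$, $S_{2}=\{x\in M\mid x\le a\oplus b\text{ for some }a\in A,\ b\in B\}$ and $S_{3}=A\oplus B$ (the third displayed set being $A\oplus B$ by definition). The inclusions $S_{3}\s S_{2}\s S_{1}$ are immediate: $x=a\oplus b$ gives $x\le a\oplus b$; and if $x\le a\oplus b$ with $a\in A$, $b\in B$, then $a\oplus b\in S_{1}$ (because $A,B\s S_{1}$ and $S_{1}$ is closed under $\oplus$), whence $x\in S_{1}$ since $S_{1}$ is a down-set. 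So the whole lemma follows once we establish $S_{1}\s S_{3}$, and for that it is enough to check that $A\oplus B$ is a normal ideal of $M$ containing $A\cup B$: minimality of $\langle A\cup B\rangle_{n}$ then yields the inclusion and forces $S_{1}=S_{2}=S_{3}$.

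The routine closure properties come first. $A\oplus B$ contains $A$ and $B$ because $a=a\oplus 0$ and $b=0\oplus b$, and $0=0\oplus 0\in A\oplus B$. For closure under $\oplus$, write $(a_{1}\oplus b_{1})\oplus(a_{2}\oplus b_{2})=a_{1}\oplus(b_{1}\oplus a_{2})\oplus b_{2}$ and use normality of $B$ to replace $b_{1}\oplus a_{2}\in B\oplus a_{2}=a_{2}\oplus B$ by some $a_{2}\oplus b_{1}'$ with $b_{1}'\in B$; the product then equals $(a_{1}\oplus a_{2})\oplus(b_{1}'\oplus b_{2})\in A\oplus B$, since $A,B$ are ideals. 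Normality of $A\oplus B$ is handled the same way: to push an arbitrary $z\in M$ across $a\oplus b$, move it past $a$ using normality of $A$ and then past $b$ using normality of $B$ (symmetrically for the opposite inclusion).

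The substantial point---the one I expect to be the main obstacle---is that $A\oplus B$ is downward closed. Let $x\le a\oplus b$ with $a\in A$, $b\in B$, and put $a':=x\wedge a$; then $0\le a'\le a$, so $a'\in A$ because $A$ is a down-set. By (A6), $a'\oplus\bigl((a')^{\sim}\odot x\bigr)=x\vee a'=x$, so with $b':=(a')^{\sim}\odot x$ we have $x=a'\oplus b'$, and it only remains to see $b'\in B$; I claim $b'\le b$. First, $(a')^{\sim}\odot x=a^{\sim}\odot x$: unfolding $p\odot q=(q^{-}\oplus p^{-})^{\sim}$ and using that $x\mapsto x^{-}$ and $x\mapsto x^{\sim}$ are mutually inverse antitone bijections, this reduces to $x^{-}\oplus(x\wedge a)=x^{-}\oplus a$, which holds since $\oplus$ distributes over $\wedge$:
\[
x^{-}\oplus(x\wedge a)=(x^{-}\oplus x)\wedge(x^{-}\oplus a)=1\wedge(x^{-}\oplus a)=x^{-}\oplus a .
\]
Then, using $(a^{\sim})^{-}=a$, axiom (A7), and the monotonicity of $\odot$,
\[
b'=a^{\sim}\odot x\ \le\ a^{\sim}\odot(a\oplus b)\ =\ a^{\sim}\odot\bigl((a^{\sim})^{-}\oplus b\bigr)\ =\ (a^{\sim}\oplus b^{\sim})\odot b\ \le\ 1\odot b\ =\ b .
\]
Hence $b'\in B$ and $x=a'\oplus b'\in A\oplus B$. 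So $A\oplus B$ is an ideal, and with the previous paragraph a normal ideal containing $A\cup B$; therefore $\langle A\cup B\rangle_{n}\s A\oplus B$, which finishes the proof.

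The delicate bookkeeping is thus confined to the last paragraph---the choice of the split $x=(x\wedge a)\oplus\bigl((x\wedge a)^{\sim}\odot x\bigr)$ and the estimate $(x\wedge a)^{\sim}\odot x\le b$, the auxiliary facts $(x\wedge a)^{\sim}\odot x=a^{\sim}\odot x$ and $a^{\sim}\odot(a\oplus b)\le b$ being where the non-commutativity has to be watched. If one prefers to bypass these computations, the inclusion $\langle A\cup B\rangle_{n}\s A\oplus B$ can instead be read off from the representing unital $\ell$-group $(G,u)$ with $M\cong\Gamma(G,u)$: by Theorem \ref{ideals} the normal ideals $A,B$ correspond to $\ell$-ideals $\widetilde A,\widetilde B$ of $G$, the normal ideal $\langle A\cup B\rangle_{n}$ corresponds to $\widetilde A+\widetilde B$, and one invokes the standard $\ell$-group fact $(\widetilde A+\widetilde B)^{+}=\widetilde A^{+}+\widetilde B^{+}$ together with the truncation $a':=x\wedge\alpha$, $b':=-a'+x$ applied to a decomposition $x=\alpha+\beta$ (with $\alpha\in\widetilde A^{+}$, $\beta\in\widetilde B^{+}$) to stay inside $[0,u]$.
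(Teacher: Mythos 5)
Your proof is correct, and its skeleton matches the paper's: both arguments reduce the lemma to showing that $A\oplus B$ (equivalently, the set $\{x\mid x\le a\oplus b\}$) is a normal ideal containing $A\cup B$, with closure under $\oplus$ and normality obtained by shuffling summands past one another via the normality of $A$ and $B$, and then invoking minimality of $\langle A\cup B\rangle_n$. The one substantive step --- that $x\le a\oplus b$ with $a\in A$, $b\in B$ forces $x=a_1\oplus b_1$ for some $a_1\in A$, $b_1\in B$ --- is where you genuinely diverge: the paper disposes of it by citing the Riesz decomposition property \cite[Thm.~2.6]{Dvu1}, whereas you prove the instance you need directly from the axioms, taking $a_1=x\wedge a$ and $b_1=(x\wedge a)^\sim\odot x$ and checking, via (A6), (A7), the distributivity of $\oplus$ over $\wedge$, and the identity $(x\wedge a)^\sim\odot x=a^\sim\odot x$, that $x=a_1\oplus b_1$ and $b_1\le b$. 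These computations are sound (the auxiliary facts $((a_1)^\sim)^-=a_1$ and $x^-\oplus x=1$ are standard, and both uses of monotonicity of $\odot$ are legitimate); in effect you have reproved the two-summand Riesz decomposition rather than assumed it, which buys self-containedness at the cost of some non-commutative bookkeeping that the citation renders unnecessary. The closing alternative through the representing unital $\ell$-group is a reasonable third route, but as written it leans on the unproved identity $(\widetilde A+\widetilde B)^{+}=\widetilde A^{+}+\widetilde B^{+}$, so it should stay an aside unless you supply that fact.
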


\begin{proof}
If we set $I= \{x\in M \mid x\le a\oplus b \mbox{ for some } a\in A,\ b \in B\}$, then $I$ contains $A$ and $B$. It is clear that it is a down set. Let $x,y \in I$, then $x\le a_1\oplus b_1$ and $y \le a_2\oplus b_2$ for some $a_1,a_2 \in A$ and $b_1,b_2\in B$. Then $x\oplus y \leq a_1 \oplus b_1 \oplus a_2 \oplus b_2= a_1 \oplus a_2 \oplus b_1' \oplus b_2$, where $b_1' \in B$. Hence, $I$ is an ideal. Using the Riesz decomposition property, \cite[Thm. 2.6]{Dvu1}, we have that if $x \in I$ and $x \le a\oplus b$, then there are $a_1 \le a$ and $b_1\le b$ such that $x = a_1 \oplus b_1$. Hence, $I = \{x\in M \mid x= a\oplus b \mbox{ for some } a\in A,\ b \in B\}$, in addition, $I = \{x\in M \mid x= b\oplus a \mbox{ for some } a\in A,\ b \in B\}$ which is true in view of normality of $A$ and $B$. Take $z\in M$ and $x =a\oplus b\in I$. Then $z\oplus x= z \oplus (a\oplus b)= a'\oplus b' \oplus z$ for some $a'\in A$ and $b' \in B$, and similarly we have $a\oplus b \oplus z = z \oplus a'' \oplus b''$ which proves that $I$ is a normal ideal of $M$ generated by $A\cup B$.
\end{proof}

\begin{lem}\label{le:2}
If $a$ is a Boolean element of a pseudo $MV$-algebra $M$, then $\downarrow a$ is a normal ideal of $M$.
\end{lem}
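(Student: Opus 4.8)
The plan is to verify the three defining conditions of a normal ideal separately; only normality has any real content. That $\downarrow a$ is a down-set is immediate. For closure under $\oplus$, if $i,j\le a$ then, since $\oplus$ is isotone in each argument and $a\oplus a=a$ (because $a\in B(M)$), we get $i\oplus j\le a\oplus a=a$; hence $\downarrow a$ is an ideal of $M$.

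For normality I would pass to the unital $\ell$-group picture, identifying $M$ with $\Gamma(G,u)$ as in Theorem~\ref{functor}. Since $a\in B(M)$ means $a\wedge a^-=a\wedge(u-a)=0$ in $G$ and $a\le u$, the ideal $\downarrow a$ is exactly the $\ell$-group interval $[0,a]$. Fix $x\in M$. The map $i\mapsto x\oplus i=(x+i)\wedge u$ is isotone on $[0,a]$, so its image lies between its value at $0$, which is $x\wedge u=x$, and its value at $a$, which is $x\oplus a$; here the Boolean hypothesis enters, giving $x\oplus a=x\vee a=a\oplus x$. Thus $x\oplus\downarrow a\s [x,x\vee a]$. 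Conversely, given $z$ with $x\le z\le x\vee a$, note $x\vee a\le x+a$ in $G$ (both $x,a$ are positive), so $z\le x+a$; then $i:=(-x)+z$ satisfies $0\le i\le a$ and $x+i=z\le u$, whence $x\oplus i=z$. Hence $x\oplus\downarrow a=[x,x\vee a]$, and the symmetric computation on the other side (with $i:=z+(-x)$ and $a\vee x\le a+x$) gives $\downarrow a\oplus x=[x,a\vee x]$. Since $x\vee a=a\vee x$, the two sets coincide, so $\downarrow a$ is normal.

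The only non-routine point — and the reason the lemma is true at all — is the coincidence of the two endpoints: in a non-commutative $\ell$-group $x+a$ and $a+x$ generally differ, so a priori $x\oplus\downarrow a$ and $\downarrow a\oplus x$ are different intervals, and it is precisely $a\wedge(u-a)=0$ that forces $x\oplus a=a\oplus x$ and thereby identifies them. If one prefers to stay inside $M$, the same idea runs through axiom (A6) in the form $x\vee y=(y\odot x^-)\oplus x=x\oplus(x^\sim\odot y)$: for $i\le a$ one writes $x\oplus i=((x\oplus i)\odot x^-)\oplus x$ and checks $(x\oplus i)\odot x^-\le(x\vee a)\odot x^-=a\odot x^-\le a$, using the standard identities $x\odot x^-=0$ and $a\odot 1=a$ together with distributivity of $\odot$ over $\vee$; the reverse inclusion is dual, interchanging $^-$ with $^\sim$.
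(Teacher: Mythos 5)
Your proof is correct. It differs from the paper's argument mainly in how normality is established: the paper fixes $b\le a$, writes $x\oplus b=((x\oplus b)\odot x^-)\oplus x$ via (A6), and bounds $(x\oplus b)\odot x^-\le (x\vee a)-x\le a$ using the group subtraction, thereby getting the one-sided containment $x\oplus\downarrow a\s\downarrow a\oplus x$ (and "similarly" the other), whereas you compute both cosets exactly, identifying $x\oplus\downarrow a$ and $\downarrow a\oplus x$ with the order interval $[x,x\vee a]$ of $\Gamma(G,u)$ and letting the Boolean identity $x\oplus a=x\vee a=a\oplus x$ do the work. Your version is slightly more informative (it describes the cosets precisely, not just their equality) at the cost of working explicitly with the two one-sided differences $-x+z$ and $z+(-x)$ in the non-commutative group; your closing remark about running the argument through (A6), with $(x\oplus i)\odot x^-\le(x\vee a)\odot x^-=a\odot x^-\le a$, is essentially the paper's proof verbatim. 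Both routes ultimately rest on the representation $M\cong\Gamma(G,u)$ and on the fact that Boolean elements commute with everything under $\oplus$, so the difference is one of packaging rather than substance.
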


\begin{proof}
Let $a$ be a Boolean element of $M$. Then evidently $\downarrow a$ is an ideal of $M$. Let $x \in M$ and $b\in \downarrow a$. Then $x\oplus b = (x\oplus b)\odot x^- \oplus x$. Since $b \le a$, we have $(x\oplus b)\odot x^- \le (x\oplus a)\odot x^- =(x\vee a)-x = (x-x)\vee (a- x) \le 0\vee a=a$, where $-$ is the group subtraction taken from the corresponding unital $\ell$-group. In a similar way we can prove the second property of normality of $\downarrow a$.
\end{proof}

\begin{defn}\label{3.1}
A normal ideal $I$ of a pseudo $MV$-algebra $M$ is called a {\it summand-ideal} if there exists a normal ideal $J$ of $M$ such that $\langle I\cup J\rangle_n=M$ and $I\cap J=\{0\}$.
In this case, we write $M=I\boxplus J$. The set of all summand-ideals of $M$ is denoted by $\mathfrak{Sum}(M)$. Evidently, $\{0\}, M \in \mathfrak{Sum}(M)$.
\end{defn}

In the next proposition, we will gather some properties of summand-ideals of an $MV$-algebra.

\begin{prop}\label{3.2}
Let $A$ be a normal ideal of a pseudo $MV$-algebra $(M;\oplus,^-,^\sim,0,1)$.
\begin{itemize}
\item[{\rm (i)}] If $A$ is a summand-ideal of $M$ such that $M=A\boxplus B$ for some ideal $B$ of $M$, then $M=A\oplus B$ and  $B=A^{\bot}$.
\item[{\rm (ii)}] $A\in \mathfrak{Sum}(M)$ if and only if $M=A\oplus A^{\bot}$.
\item[{\rm (iii)}] If $A$ is a summand-ideal of $M$, then $I=(A\cap I)\oplus(I\cap A^\bot)$ for each normal ideal $I$ of $M$.
\item[{\rm (iv)}] If $A\in \mathfrak{Sum}(M)$, then $A=A^{\bot\bot}$. That is, $\mathfrak{Sum}(M)\s \rho(M)$.
\item[{\rm (v)}] If $A,B\in \mathfrak{Sum}(M)$, then $A\cap B\in \mathfrak{Sum}(M)$.
\item[{\rm (vi)}] For each $x\in M$, there exist unique elements $a\in A$ and $b\in A^\bot$ such that $x=a\oplus b$.
\item[{\rm (vii)}] If $A$ is a summand-ideal of $M$, then there is a unique element $a\in A\cap B(M)$ such that $A=\downarrow a$.
\end{itemize}
\end{prop}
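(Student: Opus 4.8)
### Proof Plan

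The plan is to prove the seven items roughly in the stated order, leaning on Lemmas~\ref{le:1} and \ref{le:2} and the correspondence between normal ideals of $M$ and $\ell$-ideals of the representing unital $\ell$-group $(G,u)$ from Theorem~\ref{ideals}. Throughout I will use that, by Lemma~\ref{le:1}, $\langle A\cup B\rangle_n = A\oplus B$ whenever $A,B$ are normal ideals, so the defining condition $M = A\boxplus B$ says precisely $A\oplus B = M$ and $A\cap B = \{0\}$.

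\medskip

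\emph{(i).} Suppose $M = A\boxplus B$. By Lemma~\ref{le:1} we already get $M = A\oplus B$. To identify $B$ with $A^{\bot}$: first, $B\s A^{\bot}$ because if $b\in B$ and $a\in A$ then $a\wedge b\in A\cap B=\{0\}$. For the reverse inclusion, take $x\in A^{\bot}$. Since $M=A\oplus B$, write $x = a\oplus b$ with $a\in A$, $b\in B$. Using the Riesz decomposition / the lattice structure in $G$, one shows $a\le x$ (because $a\le a\oplus b = x$ in $M$ translates to an inequality in $[0,u]$, and $a\wedge x \le a\wedge$(something in $A^\bot$)$\cdots$); more cleanly, $a = a\wedge x \in A\cap A^{\bot} = \{0\}$, using that $a\le x$. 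Hence $x = b\in B$. The step $a\le x\Rightarrow a = a\wedge x$ is immediate; the point requiring care is $a\le x$, which follows since $a\le a\oplus b$ always holds in a pseudo $MV$-algebra. So $B = A^{\bot}$.

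\emph{(ii).} If $A\in\mathfrak{Sum}(M)$, then by (i) the witnessing $B$ equals $A^{\bot}$ and $M = A\oplus A^{\bot}$. Conversely, if $M = A\oplus A^{\bot}$, then since $A^{\bot}$ is always a polar hence a normal ideal (here representability is \emph{not} needed for $A^\bot$ to be an ideal; it is a down-set closed under $\oplus$ by a standard argument, or one invokes the $\ell$-group picture where $A^{\bot}$ corresponds to the polar of the $\ell$-ideal $\Phi(A)$), and $A\cap A^{\bot}=\{0\}$ by definition of polar, we get $M = A\boxplus A^{\bot}$.

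\emph{(iii).} Given $M = A\oplus A^{\bot}$ and a normal ideal $I$, the inclusion $(A\cap I)\oplus(I\cap A^{\bot})\s I$ is clear since $I$ is an ideal closed under $\oplus$. For $\s$ in the other direction, take $x\in I$ and write $x = a\oplus b$ with $a\in A$, $b\in A^{\bot}$ via (vi) (or directly). Then $a\le x\in I$ forces $a\in I$, so $a\in A\cap I$; similarly $b\le x$—or rather $b = a^{\sim}\odot x$ type expression—so $b\in I$, giving $b\in I\cap A^{\bot}$. I'd pass to $(G,u)$ here: under $\Phi$, $A\mapsto \bar A$, $A^\bot\mapsto \bar A^{\bot}$, $I\mapsto\bar I$, and in $G$ we have $G = \bar A\boxplus\bar A^{\bot}$ as a cardinal (direct) sum of $\ell$-ideals, whence every $\ell$-ideal $\bar I$ decomposes as $(\bar A\cap\bar I)\oplus(\bar I\cap\bar A^{\bot})$ by the standard $\ell$-group fact; then translate back.

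\emph{(iv).} From (ii), $M = A\oplus A^{\bot}$. Apply the same decomposition to the normal ideal $A^{\bot\bot}$ using (iii): $A^{\bot\bot} = (A\cap A^{\bot\bot})\oplus(A^{\bot\bot}\cap A^{\bot}) = A\oplus\{0\} = A$, since $A\s A^{\bot\bot}$ always and $A^{\bot\bot}\cap A^{\bot}\s A^{\bot}\cap A^{\bot\bot}$... more simply $A^{\bot\bot}\cap A^{\bot} = \{0\}$ because $A^{\bot}\cap A^{\bot\bot}$ lies in the polar of itself. Hence $\mathfrak{Sum}(M)\s\rho(M)$.

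\emph{(v).} Let $A,B\in\mathfrak{Sum}(M)$. Using the $\ell$-group correspondence, $\bar A$ and $\bar B$ are cardinal summands of $G$, and in $\rho(G)$ (a Boolean algebra) the meet of two summands is a summand; concretely $G = \bar A\cap\bar B\ \boxplus\ (\bar A^{\bot}\vee\bar B^{\bot})$ — one checks $\bar A\cap\bar B$ and $\bar A^{\bot}+\bar B^{\bot}$ are complementary $\ell$-ideals by a short computation with the decompositions $G = \bar A\oplus\bar A^{\bot} = \bar B\oplus\bar B^{\bot}$. Pulling back through $\Psi$ gives $A\cap B\in\mathfrak{Sum}(M)$. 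Alternatively, argue directly inside $M$: given $x\in M$, decompose via $A$, then decompose each piece via $B$, and check the $(A\cap B)$-component and its complement behave as required.

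\emph{(vi).} Existence is (ii) (write $x = a\oplus b$, $a\in A$, $b\in A^{\bot}$). For uniqueness: if $a\oplus b = a_1\oplus b_1$ with $a,a_1\in A$, $b,b_1\in A^{\bot}$, move to $(G,u)$ where $[0,u]\cap\bar A$ and $[0,u]\cap\bar A^{\bot}$ sit inside complementary $\ell$-ideals; in a cardinal sum the decomposition of a positive element into its two coordinates is unique (coordinatewise $\min$ with the summands), so $a = a_1$ and $b = b_1$.

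\emph{(vii).} By (iv), $A = A^{\bot\bot}$ is a polar ideal. The key claim: a summand-ideal is principal, generated by a Boolean element. Pass to $(G,u)$: $\bar A$ is a cardinal summand $\ell$-ideal of $G$, so $u$ decomposes uniquely as $u = a + c$ with $a\in\bar A$, $c\in\bar A^{\bot}$, both positive (the coordinates of the strong unit). I claim $a\in[0,u]$ is a Boolean element of $M$ with $A = \downarrow a$. That $a$ is Boolean: $a\wedge(u-a) = a\wedge c = 0$ since $a\in\bar A$, $c\in\bar A^{\bot}$; by the cited characterization (an element is Boolean iff it meets its negation in $0$), $a\in B(M)$. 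That $A = \downarrow a$: by Lemma~\ref{le:2}, $\downarrow a$ is a normal ideal; and for $x\in[0,u]\cap\bar A$ we have $x\le u$ and $x\in\bar A$, so $x = x\wedge u = (x\wedge a) + (x\wedge c) = x\wedge a\le a$ using the cardinal decomposition of $u$ and that $x\wedge c\in\bar A\cap\bar A^{\bot}=\{0\}$; conversely $\downarrow a\s A$ since $a\in A$ and $A$ is a down-set. Uniqueness of $a$: if $A = \downarrow a = \downarrow a'$ with $a,a'\in B(M)$, then $a\le a'$ and $a'\le a$.

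\medskip

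The main obstacle, and the common thread, is setting up cleanly the passage between the decomposition $M = A\boxplus A^{\bot}$ in the pseudo $MV$-algebra and the cardinal (direct) sum decomposition $G = \Phi(A)\boxplus\Phi(A^{\bot})$ of $\ell$-ideals in the representing unital $\ell$-group, so that the classical $\ell$-group facts about polars and cardinal summands (especially that a cardinal summand of the positive cone is obtained by coordinatewise meet, giving uniqueness in (vi) and the Boolean generator in (vii)) can be quoted. One must be careful that $\Phi$ from Theorem~\ref{ideals} really sends polars to polars and preserves the join/complement structure of the Boolean algebras $\rho(M)$ and $\rho(G)$; once that compatibility is established, (iv)--(vii) follow from their $\ell$-group analogues, while (i)--(iii) are light enough to do by hand in $M$ using only the Riesz decomposition property and the identity $\langle A\cup B\rangle_n = A\oplus B$ from Lemma~\ref{le:1}. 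A secondary subtlety is that several steps tacitly use that $A^{\bot}$ is a \emph{normal} ideal; if the ambient $M$ is not assumed representable this needs the observation that a summand-complement is automatically normal (which is exactly what (i) delivers), so the logical order of (i) before (ii) matters.
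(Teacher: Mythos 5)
Your proposal is correct, and for items (i)--(iii) it follows essentially the same intrinsic argument as the paper: Lemma \ref{le:1} to identify $\langle A\cup B\rangle_n$ with $A\oplus B$, the inequality $a\le a\oplus b$ to kill the $A$-component of an element of $A^\bot$, and the fact that both components of $x=a\oplus b$ lie below $x$ to prove (iii). Where you genuinely diverge is in (iv)--(vii): you route $A=A^{\bot\bot}$, the uniqueness of the decomposition, and the existence of the Boolean generator through the representing unital $\ell$-group $(G,u)$ via Theorem \ref{ideals}, quoting classical facts about polars and cardinal summands. The paper instead stays entirely inside $M$ and gets (vi) and (vii) almost immediately from the sub-distributivity inequality $x\wedge(y\oplus z)\le(x\wedge y)\oplus(x\wedge z)$ of \cite[Prop. 1.17]{Georgescu}: for uniqueness, if $a\oplus b=u\oplus v$ then $u=u\wedge(a\oplus b)\le(u\wedge a)\oplus(u\wedge b)=u\wedge a$ since $u\wedge b\in A\cap A^\bot=\{0\}$, whence $u\le a$ and symmetrically $a\le u$; for (vii), the component $a$ of $1=a\oplus b$ satisfies $y=y\wedge 1\le y\wedge a$ for every $y\in A$, so $A=\downarrow a$ and $a\oplus a\le a$ forces $a\in B(M)$. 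Your (iv), obtained by applying (iii) to the ideal $A^{\bot\bot}$, is a clean alternative to the paper's appeal to (i) and needs no normality of $A^{\bot\bot}$ since the proof of (iii) uses only that $I$ is an ideal. The cost of your route is exactly the obstacle you flag yourself: you must verify that the correspondence of Theorem \ref{ideals} carries $M=A\boxplus A^\bot$ to a cardinal-sum decomposition of $G$ and sends the polar $A^\bot$ to the polar $\Phi(A)^{\bot_G}$; this is true but is additional work that the paper's one-line inequality renders unnecessary, so if you keep the $\ell$-group detour you should write out that compatibility rather than gesture at it. Finally, your closing remark about normality of $A^\bot$ in the converse of (ii) is a real point that the paper dismisses as ``clear''; note that the argument of (vii) uses only $M=A\oplus A^\bot$ and $A\cap A^\bot=\{0\}$, so it already exhibits $A^\bot$ as $\downarrow b$ for a Boolean $b$, and Lemma \ref{le:2} then supplies the required normality without circularity.
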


\begin{proof}
(i) Let $A$ be a summand-ideal of $M$ such that $M=A\boxplus B$ for some normal ideal $B$ of $M$. Then $A\cap B=\{0\}$ and $M=\langle A\cup B\rangle_n$. If $b\in B$, then
$b\wedge a\in A\cap B$ for all $a\in A$, so $b\in A^{\bot}$. Thus $B\s A^{\bot}$. Moreover, if $x\in A^{\bot}$,
then by Lemma \ref{le:1}, there exist $a\in A$ and $b\in B$ such that $x= a\oplus b$.
Since $A$ and $B$ are ideals of $M$, then $a_1\in A$ and $b_1\in B$, so $x\in A\oplus B$ and
$$
0=x\wedge a_1=(a_1\oplus b_1)\wedge a_1\geq a_1\wedge a_1=a_1.
$$
It follows that $x=b_1\in B$. Therefore, $B=A^{\bot}$.

(ii) Let $A\in \mathfrak{Sum}(M)$. Then there exists a normal ideal $B$ of $M$ such that $M=A\boxplus B$ and so
by (i), $M=A\boxplus A^{\bot}$. The proof of the converse is clear.

(iii) Let $A$ be a summand-ideal of $M$ and $I$ be a normal ideal of $M$. If $x\in I$, then by (ii),
$x=a\oplus b$, for some $a\in A$ and $b\in A^\bot$. Since $b,a\leq x\in I$, then $a,b\in I$, so $a\in I\cap A$ and
$b\in I\cap A^\bot$. It follows that $x\in  (A\cap I)\oplus(I\cap A^\bot)$. Therefore,
$I=(A\cap I)\oplus(I\cap A^\bot)$.

(iv) Let $A\in \mathfrak{Sum}(M)$. Then by (i), $M=A\oplus A^\bot$ and hence $A^\bot\in \mathfrak{Sum}(M)$. Similarly, (i) implies that
$A=A^{\bot\bot}$.

(v) Let $A,B\in \mathfrak{Sum}(M)$. By (iii), $B=(A\cap B)\oplus(A^\bot\cap B)$ and
$B^\bot=(A\cap B^\bot)\oplus(A^\bot\cap B^\bot)$. It follows from (i) that
$M=B\oplus B^\bot=(A\cap B)\oplus(A^\bot\cap B)\oplus (A\cap B^\bot)\oplus(A^\bot\cap B^\bot)=
(A\cap B)\oplus\Big((A^\bot\cap B)\oplus (A\cap B^\bot)\oplus(A^\bot\cap B^\bot)\Big)$.

(1) Since $(A^\bot\cap B)$, $(A\cap B^\bot)$ and $(A^\bot\cap B^\bot)$ are ideals of $M$,
$(A^\bot\cap B)\oplus (A\cap B^\bot)\oplus(A^\bot\cap B^\bot)$ is an ideal of $M$.

(2) We claim $(A\cap B)\cap\Big((A^\bot\cap B)\oplus (A\cap B^\bot)\oplus(A^\bot\cap B^\bot)\Big)=\{0\}$. Indeed,
put $x\in (A\cap B)\cap\Big((A^\bot\cap B)\oplus (A\cap B^\bot)\oplus(A^\bot\cap B^\bot)\Big)$. Similarly to
the proof of part (i), we can see that $x=u\oplus v\oplus w$ for some $u\in A^\bot$, $v\in B^\bot$ and $w\in B^\bot\cap A^\bot$.
So, $u,v,w\in A\cap B$ (since $u,v,w\leq x\in A\cap B$). It follows that $u=v=w=0$ and hence $x=0$.

From (1) and (2) it follows that $A\cap B\in \mathfrak{Sum}(M)$.

(vi) Let $x\in M$. By (i), there are $a\in A$  and $b\in A^\bot$ such that $x=a\oplus b$. Let $x=u\oplus v$ for some $u\in A$ and
$v\in A^\bot$. Then $a\oplus b=u\oplus v$ and so $u=u\wedge (a\oplus b)$. By \cite[Prop. 1.17]{Georgescu}, it follows that
$u\leq (u\wedge a)\oplus (u\wedge b)=u\wedge a$, hence $u\leq a$. A similar argument shows that $a\leq u$, that is $a=u$. In a similar way, we can show that
$b=v$.

(vii) Let $A$ be a summand-ideal of $M$. By (i) and (vi), there are unique elements $a\in A$ and $b\in A^\bot$ such that $1=a\oplus b$.
For each $y\in A$, we have
$y=y\wedge 1=y\wedge (a\oplus b)\leq (y\wedge a)\oplus (y\wedge b)=(y\wedge a)\oplus 0=(y\wedge a)$, hence $y=y\wedge a$ and so
$y\leq a$ which entails $A=\downarrow a$. Since $a\leq a\oplus a\in A$, $a\oplus a \leq a$, we get $a=a\oplus a$. That is $a\in A\cap B(M)$. If $a_1 \in A\cap B(M)$ has the property $A = \downarrow a_1$, then $a=a_1$.
\end{proof}

Note that from Proposition \ref{3.2}(vii) it follows that any summand-ideal $A$ is a {\it Stonean ideal} of $M$, i.e. $A = \downarrow a$ for some Boolean element $a \in A\cap B(M)$.

\begin{cor}\label{3.2.1}
Let $(M;\oplus,^-,^\sim,0,1)$ be a pseudo $MV$-algebra.  Then
\begin{itemize}

\item[{\rm(i)}] A non-empty subset $A$ of $M$ is a summand-ideal of $M$ if and only if $A=\downarrow a$
for some Boolean element $a\in M$.
In such a case, there is a unique $a \in A\cap B(M)$ such that $A = \downarrow a$ and $A^\bot=\downarrow a'$.

\item[{\rm(ii)}]  If $A,B\in \mathfrak{Sum}(M)$, then $A\vee B:=\langle A \cup B\rangle_n \in \mathfrak{Sum}(M)$, and  $\mathfrak{Sum}(M)$ is a Boolean algebra that is isomorphic to $B(M)$.
\end{itemize}
\end{cor}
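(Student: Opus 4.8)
The plan is to deduce both parts from Proposition \ref{3.2}, Lemmas \ref{le:1}--\ref{le:2}, and the elementary facts about Boolean elements recalled in Section 2: for $a\in B(M)$ one has $a^-=a^\sim=:a'\in B(M)$, $a\wedge a'=0$, and $a\oplus x=a\vee x=x\oplus a$ for every $x\in M$.

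For (i), the forward implication is immediate from Proposition \ref{3.2}(vii): a summand-ideal $A$ equals $\downarrow a$ for a unique $a\in A\cap B(M)$. For the converse, suppose $A=\downarrow a$ with $a\in B(M)$. Then $A$ is a normal ideal by Lemma \ref{le:2}, and so is $\downarrow a'$. I would first prove $A^{\bot}=\downarrow a'$: the inclusion $\downarrow a'\s A^{\bot}$ is clear since $y\le a'$ forces $y\wedge x\le a'\wedge a=0$ for all $x\le a$; conversely, if $y\in A^{\bot}$ then $y\wedge a=0$, so using the distributive inequality $y\wedge(a\oplus a')\le(y\wedge a)\oplus(y\wedge a')$ from \cite[Prop. 1.17]{Georgescu} together with $a\oplus a'=a\vee a'=1$ we get $y=y\wedge 1\le y\wedge a'$, i.e. $y\le a'$. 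Next, $A\cap A^{\bot}=\downarrow a\cap\downarrow a'=\downarrow(a\wedge a')=\{0\}$, while by Lemma \ref{le:1} the normal ideal $\langle A\cup A^{\bot}\rangle_n=A\oplus A^{\bot}$ contains $a\oplus a'=1$ and hence equals $M$; thus $A$ is a summand-ideal with complement $A^{\bot}=\downarrow a'$ (equivalently, invoke Proposition \ref{3.2}(ii)). Uniqueness of $a$ is again Proposition \ref{3.2}(vii) (or simply: $\downarrow a=\downarrow a_1$ with $a,a_1$ Boolean forces $a\le a_1$ and $a_1\le a$).

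For (ii), write $A=\downarrow a$, $B=\downarrow b$ with $a,b\in B(M)$ as in (i); since $B(M)$ is a subalgebra of $M$, $a\vee b\in B(M)$. By Lemma \ref{le:1}, $\langle A\cup B\rangle_n=A\oplus B$, and I claim this equals $\downarrow(a\vee b)$: every $x\oplus y$ with $x\le a$, $y\le b$ satisfies $x\oplus y\le a\oplus b=a\vee b$, while conversely $\langle A\cup B\rangle_n$ is a down-closed ideal containing $a\oplus b=a\vee b$, hence contains $\downarrow(a\vee b)$. So $A\vee B=\downarrow(a\vee b)$, a summand-ideal by (i). Finally, the map $\varphi\colon B(M)\to\mathfrak{Sum}(M)$, $a\mapsto\downarrow a$, is well defined and bijective by (i), is an order-isomorphism ($a\le b\Leftrightarrow\downarrow a\s\downarrow b$), and satisfies $\varphi(a\wedge b)=\downarrow a\cap\downarrow b$, $\varphi(a\vee b)=\varphi(a)\vee\varphi(b)$ by the computation just made, $\varphi(a')=\downarrow a'=(\downarrow a)^{\bot}=\varphi(a)^{\bot}$ by (i), $\varphi(0)=\{0\}$ and $\varphi(1)=M$. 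Hence $\varphi$ transports the Boolean algebra structure of $B(M)$ onto $\mathfrak{Sum}(M)$, so $\mathfrak{Sum}(M)$ is a Boolean algebra isomorphic to $B(M)$.

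I expect the only mildly delicate step to be the inclusion $A^{\bot}\s\downarrow a'$ in (i), which needs the distributive inequality of \cite[Prop. 1.17]{Georgescu} and the identity $a\oplus a'=1$ rather than a purely order-theoretic argument; once $A^{\bot}=\downarrow a'$ is in hand, everything else is routine bookkeeping built on Lemmas \ref{le:1}--\ref{le:2} and Proposition \ref{3.2}.
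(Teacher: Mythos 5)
Your proposal is correct and follows essentially the same route as the paper: both directions of (i) rest on Proposition \ref{3.2}(vi)--(vii), Lemmas \ref{le:1}--\ref{le:2}, and the distributive inequality $y\wedge(a\oplus b)\le(y\wedge a)\oplus(y\wedge b)$ from \cite[Prop. 1.17]{Georgescu}, and (ii) is obtained by identifying $\langle A\cup B\rangle_n$ with $\downarrow(a\oplus b)=\downarrow(a\vee b)$ and using the bijection $a\mapsto\downarrow a$. The only cosmetic difference is that you establish $A^{\bot}=\downarrow a'$ directly and then transport the Boolean structure of $B(M)$ along $\varphi$, whereas the paper first verifies the decomposition and the lattice identities in $\mathfrak{Sum}(M)$ before stating the isomorphism; the content is the same.
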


\begin{proof}

(i) Let $A$ be a summand-ideal of $M$. Then $M=A\boxplus A^\bot$.
By  Proposition \ref{3.2}(vi),
there are unique elements $a\in A$ and $b\in A^\bot$ such that $1=a\oplus b$ and by the proof of
Proposition \ref{3.2}(vii), $a\in B(M)\cap A$ and  $A=\downarrow a$
(similarly, since $A^\bot$ is also a summand-ideal, then $A^\bot=\downarrow b$).
By \cite[Prop. 1.17(1)]{Georgescu}, $a'=a'\wedge 1=a'\wedge (a\oplus b)\leq (a'\wedge a)\oplus (a'\wedge b)=(a'\odot a)\oplus (a'\wedge b)=
a'\wedge b$, hence $a'=a'\wedge b$, that is $a'\leq b$.
It follows that $a'\in A^\bot$. Now, Proposition \ref{3.2}(vi) and $1=a\oplus a'$ imply that
$b=a'$. Therefore, $A^\bot=\downarrow a'$. The uniqueness of $a$ follows from (vii) of Proposition \ref{3.2}.

Conversely, let $A=\downarrow a$ for some Boolean element $a$. By Lemma \ref{le:2}, $A$ is a normal ideal of $M$.
Then clearly, $\downarrow a'$ is an ideal of $M$ (since $a'$ is a Boolean element), $A\cap \downarrow a'=\downarrow a\cap \downarrow a'=\{0\}$
and $\langle A\cup \downarrow a'\rangle_n=M$ and hence, $M=A\boxplus \downarrow a'$.
Therefore, $A$ is a summand-ideal of $M$ and by Proposition \ref{3.2}(i), $A^\bot=\downarrow a'$. In a similar way,
if $A^\bot=\downarrow a'$ for some Boolean element $a\in M$, we can
show that $A=\downarrow a$ is a summand-ideal of $M$.

(ii) Let $A,B \in \mathfrak{Sum}(M)$. By (i), there are unique Boolean elements $a \in A$, $b\in B$ such that $A = \downarrow a$ and $B = \downarrow b$. Then $a\oplus b$ is a Boolean element and $a\oplus b \in \langle A\cup B\rangle_n$. Then $\langle A \cup B\rangle_n = \downarrow (a\oplus b)$ which by (i) says that $A\vee B$ is a summand-ideal of $M$. In a similar way, we can show that $A \wedge B:=A\cap B = \downarrow(a\odot b)$. In addition, $(A\vee B)^\bot = A^\bot\wedge B^\bot$ and $(A\wedge B)^\bot = A^\bot \vee B^\bot$, $\mathfrak{Sum}(M)$ is distributive with respect to $\vee$ and $\wedge$.
Therefore, by (iv) and (v) of Proposition \ref{3.2}, $\mathfrak{Sum}(M)$ is a Boolean algebra.

Finally, the mapping $a\mapsto \downarrow a$, $a\in B(M)$, describes an isomorphism of the Boolean algebras $B(M)$ and $\mathfrak{Sum}(M)$.
\end{proof}

\begin{defn}
A pseudo $MV$-algebra $(M;\oplus,^-,^\sim,0,1)$ is called (i) {\it projectable} if $a^\bot \in \mathfrak{Sum}(M)$ for all $a \in M$, and (ii)  {\it strongly projectable} if $\rho(M)\s \mathfrak{Sum}(M)$.
\end{defn}

As a corollary of \cite[Prop. 6.9]{DvuS}, we have that every projectable pseudo $MV$-algebra is representable.

\begin{cor}\label{pseudo complemented}
Each strongly projectable pseudo $MV$-algebra is a pseudocomplemented lattice.
\end{cor}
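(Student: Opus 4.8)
The goal is to show that a strongly projectable pseudo $MV$-algebra $M$ is a pseudocomplemented lattice, i.e., that for each $a \in M$ there is a largest element $a^*$ with $a \wedge a^* = 0$. The plan is to produce this pseudocomplement directly from the polar $a^{\bot}$. Note that $a^{\bot}$ is always a polar ideal (it is of the form $\{y : y \wedge x = 0,\ \forall x \in \{a\}\}$), hence $a^{\bot} \in \rho(M)$, and by the hypothesis of strong projectability $\rho(M) \subseteq \mathfrak{Sum}(M)$, so $a^{\bot}$ is in fact a summand-ideal.

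Now I would invoke Corollary \ref{3.2.1}(i): every summand-ideal equals $\downarrow e$ for a unique Boolean element $e \in B(M)$. Apply this to $A := a^{\bot}$ to obtain a Boolean element $a^* \in B(M)$ with $a^{\bot} = \downarrow a^*$. I claim $a^*$ is the pseudocomplement of $a$. First, $a^* \in a^{\bot}$, so $a \wedge a^* = 0$. Second, if $x \wedge a = 0$ then $x \in a^{\bot} = \downarrow a^*$, so $x \le a^*$. Thus $a^*$ is the greatest element of $\{x \in M : x \wedge a = 0\}$, which is exactly the pseudocomplement of $a$. Since this works for every $a \in M$, and $M$ is already a (bounded, distributive) lattice by the preliminaries, $M$ is a pseudocomplemented lattice.

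The only point that needs a word of care is the very first one — checking that $a^{\bot}$ genuinely lies in $\rho(M)$ so that strong projectability applies. This is immediate from the remark in the Preliminaries that a subset $I$ of $M$ is a polar ideal exactly when $I = \{b \in M : b \wedge x = 0,\ \forall x \in X\}$ for some $X \subseteq M$: take $X = \{a\}$. After that, the argument is essentially a one-line application of Corollary \ref{3.2.1}(i), so I do not anticipate any real obstacle; the proof is short. One could alternatively avoid the explicit Boolean element and argue that $a^{\bot} \in \mathfrak{Sum}(M)$ forces $M = a^{\bot} \oplus a^{\bot\bot}$ and pick the top component of $1$, but routing through Corollary \ref{3.2.1}(i) is cleaner and already packages exactly that decomposition.
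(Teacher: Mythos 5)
Your proof is correct and follows essentially the same route as the paper: identify $a^{\bot}$ as a polar ideal, use strong projectability to make it a summand-ideal, and then extract the Boolean generator $a^*$ with $a^{\bot}=\downarrow a^*$ (the paper cites Proposition \ref{3.2}(vii) rather than Corollary \ref{3.2.1}(i), but these encode the same fact). Nothing further is needed.
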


\begin{proof}
Let $(M;\oplus,^-,^\sim,0,1)$ be a strongly projectable pseudo $MV$-algebra and $a\in M$.
Then $a^{\bot}$ is a polar ideal of $M$ and so $a^{\bot}$ is a summand-ideal.
By Proposition \ref{3.2}(vii),
$a^{\bot}=\downarrow b$ for some $b\in B(M)$. Clearly, $b$ is a pseudocomplement of $a$, i.e. $x\wedge a=0$ iff $x\le b$.
\end{proof}

We note that according to \cite[Thm 4.2]{151}, every $\sigma$-complete pseudo $MV$-algebra is an $MV$-algebra. The same is true if $M$ is a complete pseudo $MV$-algebra.

\begin{prop}\label{proj-exm}
If $(M;\oplus,^-,^\sim,0,1)$ is a $\sigma$-complete $MV$-algebra, then for each $a\in M$, $M=a^{\bot}\oplus a^{\bot\bot}$.
\end{prop}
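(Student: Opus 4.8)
The plan is to show that $a^\bot$ is a summand-ideal of $M$, which by Proposition~\ref{3.2}(ii) is precisely the assertion $M=a^\bot\oplus a^{\bot\bot}$. By Corollary~\ref{3.2.1}(i) it suffices to exhibit a Boolean element $s'\in B(M)$ with $a^\bot=\downarrow s'$; the same corollary then yields $a^{\bot\bot}=(\downarrow s')^\bot=\downarrow s$, where $s=(s')'$ is the Boolean complement of $s'$.

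The element $s$ will be the ``support'' of $a$: using $\sigma$-completeness, set $s:=\bigvee_{n\ge 1}n.a$. First I would record the needed continuity facts. Passing to the representing unital $\ell$-group $(G,u)$ with $M=\Gamma(G,u)$, a countable supremum of elements of $[0,u]$ is the same whether computed in $M$ or in $G$, and in any $\ell$-group a binary meet distributes over an arbitrary existing join (standard, cf.\ \cite{Darnel}); together with the identities $x\oplus y=(x+y)\wedge u$ and $x\odot y=(x-u+y)\vee 0$ this shows that $\oplus$ is continuous in each variable with respect to countable joins, and that $t\wedge\bigvee_i y_i=\bigvee_i(t\wedge y_i)$ in $M$ (equivalently, these are the familiar countable distributive laws of a $\sigma$-complete $MV$-algebra, \cite{mv-book}). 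Granting them, and using $n.a\oplus m.a=(n+m).a$ together with the fact that $(k.a)_k$ is increasing, one gets $s\oplus s=\bigvee_{n,m\ge 1}(n.a\oplus m.a)=\bigvee_{k\ge 2}k.a=s$, so $s\in B(M)$; hence $s':=s^-$ is Boolean as well, with $s\vee s'=s\oplus s^-=1$ and $s\wedge s'=0$.

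It then remains to verify $a^\bot=\downarrow s'$. Since $a=1.a\le s$, any $y\le s'$ satisfies $y\wedge a\le s\wedge s'=0$, giving $\downarrow s'\subseteq a^\bot$. Conversely, if $y\wedge a=0$, then property ($\ell$iii), applied inductively inside $G$ (using $a\oplus a\le a+a$, etc.), gives $y\wedge(n.a)=0$ for every $n$, whence $y\wedge s=\bigvee_n(y\wedge n.a)=0$; as $s$ is Boolean, $y=y\wedge 1=(y\wedge s)\vee(y\wedge s')=y\wedge s'\le s'$. Thus $a^\bot=\downarrow s'$, so by Corollary~\ref{3.2.1}(i) $a^\bot$ is a summand-ideal of $M$ with $a^{\bot\bot}=\downarrow s$, and Proposition~\ref{3.2}(i)--(ii) yields $M=a^\bot\oplus a^{\bot\bot}$. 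The only genuinely delicate step is the interchange of $\oplus$ and $\wedge$ with countable suprema, which is exactly where $\sigma$-completeness (and the underlying $\ell$-group) is used; everything else is bookkeeping with Boolean elements via the results of Section~3.
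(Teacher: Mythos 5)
Your proposal is correct and follows essentially the same route as the paper: both take $s=\bigvee_n n.a$, show it is Boolean via the interchange of $\oplus$ with countable joins, prove $a^\bot=\downarrow s'$ using distributivity of $\wedge$ over countable joins, and conclude through the summand-ideal machinery of Proposition \ref{3.2} and Corollary \ref{3.2.1}. The only cosmetic difference is that you justify the two distributive laws by passing to the representing unital $\ell$-group, whereas the paper cites them directly as Propositions 1.18 and 1.21 of \cite{Georgescu}.
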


\begin{proof}
Let $(M;\oplus,',0)$ be a $\sigma$-complete $MV$-algebra and $a\in M$. Since $a\in a^{\bot\bot}$ and
$a^{\bot\bot}$ is an ideal of $M$, then $n.a\in a^{\bot\bot}$ for all $n\in\mathbb{N}$.
Let $y:=\bigvee\{n.a\mid n\in\mathbb{N}\}$.
From \cite[Prop. 1.18]{Georgescu}, it can be easily obtained that $y\in a^{\bot\bot}$. Also, by
\cite[Prop. 1.21]{Georgescu}, $y\oplus y=y\oplus (\bigvee_{_{n\in\mathbb{N}}}n.a)=
\bigvee_{_{n\in\mathbb{N}}}(y\oplus n.a)=\bigvee_{_{n\in\mathbb{N}}}\bigvee_{_{m\in\mathbb{N}}}
(m.a\oplus n.a)=\bigvee_{_{n\in\mathbb{N}-\{1\}}}n.a=\bigvee_{_{n\in\mathbb{N}}}n.a=y$, so $y\in B(M)$.
We claim that $a^\bot=\downarrow y'$ (clearly, $y'\in B(M)$). If $x\in a^\bot$, then
$x\wedge n.a=0$ for all $n\in\mathbb{N}$ (since $n.a\in a^{\bot\bot}$) and so  \cite[Prop. 1.18]{Georgescu},
$x\wedge y=\bigvee_{n\in\mathbb{N}}(x\wedge n.a)=0$. It follows that
$x=x\wedge (y\vee y')=x\wedge y'$. That is, $x\leq y'$. Now, let $z\in \downarrow y'$.
Then $z\leq y'$ and hence $z\wedge y\leq y'\wedge y=0$. It follows that $z\wedge a=0$ (since $a\leq y$).
Thus $z\in a^\bot$. By Corollary \ref{3.2.1}(ii), $a^{\bot}\in \mathfrak{Sum}(M)$. Therefore,
$M=a^{\bot}\boxplus a^{\bot\bot}$.
\end{proof}

\begin{rmk}
From Proposition \ref{proj-exm}, we know that every $\sigma$-complete pseudo $MV$-algebra is projectable.
Now, assume that $(M;\oplus,^-,^\sim,0,1)$ is a complete pseudo $MV$-algebra. Then $M$ is a complete  $MV$-algebra.
By \cite[Prop 5.8]{Dvu1}, any polar ideal of a complete $MV$-algebra is closed under arbitrary join and so it is
a principal ideal. Hence, by Proposition \ref{proj-exm}, it is a summand ideal of $M$.
That is, any complete $MV$-algebra is strongly projectable.
\end{rmk}
%By Proposition \ref{3.2}(vii), we get that an $MV$-algebra is strong projectable if and only if its principal polar ideals are summand-ideals.

\section{Orthocompletion of pseudo $MV$-algebras}

In the present section, we establish main results of the paper. They will be deal  mainly with representable pseudo $MV$-algebras. Since every $MV$-algebra is a subdirect product of linearly ordered $MV$-algebras, the results are valid also for $MV$-algebras. In such a case, the representing unital $\ell$-group for an $MV$-algebra is of course Abelian.

We recall that two elements $x$ and $y$ of a pseudo $MV$-algebra $M$ are {\it disjoint}  if $x\wedge y = 0$.

\begin{defn}
A pseudo $MV$-algebra $(M;\oplus,^-,^\sim,0,1)$ is called {\it orthocomplete} if
\begin{itemize}
\item[(i)] $M$ is strongly projectable;

\item[(ii)] each set of pairwise disjoint non-zero elements of $M$ has the least upper bound.
\end{itemize}
\end{defn}

A non-empty subset $X$ of non-zero mutually orthogonal elements of a pseudo $MV$-algebra $M$ is said to be {\it disjoint}.

\begin{defn}
Let $(M_1;\oplus,^-,^\sim,0,1)$ be a subalgebra of a pseudo $MV$-algebra $(M_2;\oplus,^-,^\sim,0,1)$. Then $M_1$ is called a {\it large subalgebra} of $M_2$ (or $M_2$ is called an {\it essential extension} of $M_1$) if, for each $y\in M_2-\{0\}$, there are $n\in\mathbb{N}$ and
$x\in M_1-\{0\}$ such that $x\leq n. y$.
\end{defn}

\begin{exm}
Consider the Abelian $\ell$-groups $(\mathbb{Z};+,0)$ and $(\mathbb{Q};+,0)$. Then $M_1:=\Gamma(\mathbb{Z},10)$ is an
$MV$-subalgebra of $M_2:=\Gamma(\mathbb{Q},10)$. It can be easily seen that for each $y\in M_2-\{0\}$, there are
$n\in\mathbb{N}$ and $x\in M_1-\{0\}$ such that $x\leq ny$. Therefore, $M_2$ is an essential extension for $M_1$.
\end{exm}

There is an interesting relation between a pseudo $MV$-algebra and its essential extensions. In the next proposition we establish this relation.
First, we recall the following remark on pseudo $MV$-algebras.

\begin{rmk}\label{ee0}
Let $(M;\oplus,^-,^\sim,0,1)$ be a pseudo $MV$-algebra. We define two ``relative negations" $\ominus_-$ and $\ominus^\sim$ as follows
$$
x\ominus_- y :=x\odot y^-,\quad \mbox{ and } \quad y\ominus^\sim x:= y^\sim \odot x \mbox{ for } x,y \in M.
$$

Then, for all $a,b,c\in M$, we have
\begin{itemize}
\item[(i)] %$a\ominus (b\oplus c)=(a'\oplus (b\oplus c))'=((a'\oplus b)\oplus c)'=((a'\oplus b)''\oplus c)'=((a\ominus b)'\oplus c)'=(a\ominus b)\ominus c$.
$a\ominus_- (b\oplus c) = (a\ominus_- b)\ominus_-c$ and $(a\oplus b)\ominus^\sim c= b\ominus^\sim (a\ominus^\sim c)$ (use \cite[Prop 1.7]{Georgescu}).

\item[(ii)] %$a\ominus b=b'\ominus a'$.
$a\ominus_- b= a^-\ominus^\sim b^-$ and $b \ominus^\sim a=  b^\sim \ominus_- a^\sim$.
\end{itemize}
\end{rmk}

\begin{prop}\label{ee}
Let a pseudo $MV$-algebra $(M_2;\oplus,^-,^\sim,0,1)$ be an essential extension for a pseudo $MV$-algebra $(M_1;\oplus,^-,^\sim,0,1)$. If $S\s M_1$ and $u$ is the least upper bound for $S$ in $M_1$, then
$u$ is the least upper bound for $S$ in $M_2$, too.
\end{prop}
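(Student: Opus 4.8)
The plan is to pass to the representing unital $\ell$-groups, apply essentiality to the ``gap'' between $u$ and a hypothetical better upper bound, and then eliminate the unbounded multiple that essentiality forces upon us by an induction.

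First I would invoke Theorem~\ref{functor}: the inclusion $M_1\hookrightarrow M_2$ is a morphism of pseudo $MV$-algebras, hence lifts to an injective morphism of unital $\ell$-groups, so we may assume $M_2=\Gamma(G,w)$ and $M_1=\Gamma(G_1,w)$ with $G_1\le G$ an $\ell$-subgroup containing the common strong unit $w$, the orders being the restrictions of the group order. One checks at once that $u=\sup_{G_1}S$ too: a $G_1$-upper bound $h$ of $S$ has $h\wedge w\in M_1$ an upper bound of $S$, so $h\wedge w\ge u$ and $h\ge u$. Thus it is enough to show $u=\sup_{G}S$. I would also replace $S$ by the set of its finite joins — this alters neither $\sup_{M_1}S$ nor the set of upper bounds of $S$ in $M_2$ — and so assume $S$ is upward directed.

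Now take an upper bound $v\in G$ of $S$ and suppose, for contradiction, that $u\not\le v$. Set $g:=(u-v)^+=u-(u\wedge v)$; then $g>0$ and, since $u\wedge v\ge s$ for every $s\in S$, we have $0<g\le u-s\le w$ for every $s$. Consequently $T:=\{u-s:s\in S\}$ is a downward-directed subset of $[0,u]\cap G_1$ with $\bigwedge_{G_1}T=0$ (a $G_1$-lower bound $w'$ of $T$ yields the $M_1$-upper bound $u-w'$ of $S$, forcing $w'\le 0$), while $g$ is a strictly positive lower bound of $T$ in $G$. Since $g\in M_2\setminus\{0\}$, essentiality supplies $x\in M_1\setminus\{0\}$ and $n\in\mathbb N$ with $x\le n.g\le ng$, whence $x\le ng\le nt$ for all $t\in T$; the iterated form of~($\ell$iii) then gives $x=x\wedge nt\le n(x\wedge t)$. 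So $Q:=\{x\wedge t:t\in T\}$ is a downward-directed family of positive elements of $G_1$ with $\bigwedge_{G_1}Q=x\wedge\bigwedge_{G_1}T=0$, yet $x\le nq$ for every $q\in Q$.

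This last fact is the crux, and the real obstacle is precisely that essentiality only puts a member of $M_1$ below a \emph{multiple} $ng$ of the gap, not below $g$ itself. I would isolate the needed point as a lemma on $\ell$-groups: if $P$ is a downward-directed set of positive elements of an $\ell$-group $H$ with $\bigwedge_H P=0$, and $p\le nq$ for all $q\in P$ (for a single fixed $n$), then $p\le 0$. I expect to prove this by induction on $n$. For $n=1$, $p$ is a lower bound of $P$, so $p\le\bigwedge_H P=0$. For $n>1$, assume $p>0$; using~($\ell$iii) one may replace each $q$ by $p\wedge q$ and so assume $P\subseteq[0,p]$ with $0\notin P$. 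Since not every element of $P$ equals $p$ (otherwise $\bigwedge_H P=p\ne 0$), fix $q_0\in P$ with $q_0<p$ and put $P':=\{q\in P:q\le q_0\}$; downward-directedness of $P$ makes every lower bound of $P'$ a lower bound of $P$ (each $q\in P$ has some $q'\in P$ with $q'\le q\wedge q_0$, so $q'\in P'$), hence $\bigwedge_H P'=0$. For $q\in P'$, subtracting $q$ from $p\le(n-1)q+q$ gives $p-q\le(n-1)q$, and $p-q\ge p-q_0=:p_0>0$; thus $p_0\le(n-1)q$ for all $q\in P'$, and the induction hypothesis forces $p_0\le 0$, a contradiction. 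Applying the lemma with $H=G_1$, $P=Q$, $p=x$ yields $x\le 0$, contradicting $x>0$. Hence $u\le v$, and since $u$ is an upper bound of $S$ in $M_2$, we conclude $u=\sup_{M_2}S$.
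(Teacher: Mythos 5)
Your proof is correct, but it takes a genuinely different route from the paper's. The paper argues entirely inside the pseudo $MV$-algebra: assuming an upper bound $v$ with $0<v<u$ in $M_2$, essentiality yields $0<x\le n.(u\ominus_- v)$ with $x\in M_1$, and the factor $n$ is then eliminated by peeling off the terms $u\ominus_- s_i$ one at a time, re-invoking at \emph{each} of the $n$ steps the hypothesis that $u$ is the least upper bound of $S$ in $M_1$ (via the auxiliary upper bound $\bigl(u^-\oplus(x\ominus_-\bigoplus_{i\ge 2}(u\ominus_- s_i))\bigr)^\sim$). You instead transfer the problem to the representing unital $\ell$-groups via Theorem~\ref{functor}, use the least-upper-bound hypothesis essentially once (to obtain $\bigwedge_{G_1}T=0$ for $T=\{u-s : s\in S\}$), and concentrate the whole difficulty in a clean, reusable $\ell$-group lemma: a positive element lying below $n$ times every member of a downward-directed set of positive elements with infimum $0$ cannot be strictly positive. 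Your induction for that lemma is sound; note only that for $n>1$ it literally establishes $\neg(p>0)$ rather than $p\le 0$, which is all the application needs (and $p\le 0$ is recovered by applying the statement to $p^+$). What your route buys is modularity and a lemma of independent interest; what it costs is the detour through the categorical equivalence and extra care with non-commutativity. On that last point there is one small, easily repaired slip: from $w'\le u-s=u+(-s)$ one gets $w'+s\le u$ and hence $s\le(-w')+u$, not $s\le u+(-w')$, so the upper bound of $S$ produced from a lower bound $w'$ of $T$ should be $(-w')+u$; together with your observation that $u=\sup_{G_1}S$ this still forces $w'\le 0$, and the rest of the argument goes through unchanged.
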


\begin{proof}
Let $S\s M_1$ be given and let $u\in M_1$ be the least upper bound for $S$ in $M_1$. If $S$ is finite, the statement is evident. Thus let $S$ be infinite. Suppose that there exists an upper bound $v\in M_2$ for $S$, and without loss of generality, let us assume that $0< v< u$
(note that, for $u=0$, the proof is clear). Then $0<u\ominus_- v\in M_2$, by the assumption, there are $n\in\mathbb{N}$ and $x\in M_1$ such
that $0< x\leq n.(u\ominus_- v)$ and so $x\ominus_- n.(u\ominus_- v)=0$. For each $s\in S$, $s\leq v$ implies that $u\ominus_- v\leq u\ominus_- s$, whence
for every finite sequence $s_1,s_2,\ldots, s_n$ of elements of $S$, we get
$n.(u\ominus_- v)\leq (u\ominus_- s_1)\oplus\cdots\oplus (u\ominus_- s_n):=\bigoplus_{i=1}^{n}(u\ominus_- s_i)$. %We can put, e.g., $s_i=s$ for $i=1,\ldots,n$, where $s$  is an arbitrary element of $S$.
Hence $0=x\ominus_- n.(u\ominus_- v)\geq x\ominus_- \bigoplus_{i=1}^{n}(u\ominus_- s_i)$, that is
\begin{eqnarray}
\label{R1}  x\ominus_- \bigoplus_{i=1}^{n}(u\ominus_- s_i)=0.
\end{eqnarray}
From Remark \ref{ee0} (i), it follows that $\big(x\ominus_- \bigoplus_{i=2}^{n}(u\ominus_- s_i)\big)\ominus_- (u\ominus_- s_1)=0$, thus
\begin{eqnarray*}
x\ominus_- \bigoplus_{i=2}^{n}(u\ominus_- s_i)&\leq& u\ominus_- s_1=u^{-\sim}\odot  s_1^-\\ %, \mbox{ by Remark \ref{ee0}(ii)} \\
&\Rightarrow& u^-\oplus \big(x\ominus_- \bigoplus_{i=2}^{n}(u\ominus_- s_i) \big)\leq u^-\oplus (u^{-\sim}\odot s^-_1)\\%(s_{n}'\ominus u')\oplus u'\\
&\Rightarrow& u^-\oplus \big(x\ominus_- \bigoplus_{i=2}^{n}(u\ominus_- s_i) \big)\leq s_{1}^-\vee u^-=s_{1}^-,\mbox{ since $s_1\leq u$ }\\
&\Rightarrow& \Big(u^-\oplus \big(x\ominus_- \bigoplus_{i=2}^{n}(u\ominus_- s_i) \big) \Big)^\sim \geq s_1.
\end{eqnarray*}
Since $s_1$ is an arbitrary element of $S$, then
$\Big(u^-\oplus \big(x\ominus_- \bigoplus_{i=2}^{n}(u\ominus_- s_i) \big) \Big)^\sim$ is an upper bound for $S$ which  clearly belongs to
$M_1$. So, by the assumption, $u\leq \Big(u^-\oplus \big(x\ominus_- \bigoplus_{i=2}^{n}(u\ominus_- s_i) \big) \Big)^\sim$. Also,
$u^-\geq u^-\oplus \big( x\ominus_- \bigoplus_{i=2}^{n}(u\ominus_- s_i) \big)\geq u^-$ implies that
%$\Big(\big(x\ominus \sum_{i=1}^{n-1}(u\ominus s_i) \big)\oplus u' \Big)'\leq u''=u$, that is
$u=\Big(u^-\oplus \big(x\ominus_- \bigoplus_{i=2}^{n}(u\ominus_- s_i) \big)\Big)^\sim$ and
$u^-=u^-\oplus \big(x\ominus_- \bigoplus_{i=2}^{n}(u\ominus_- s_i) \big) $. Hence,
$$
0=u\odot u^-=u\odot \Big(u^-\oplus \big(x\ominus_- \bigoplus_{i=2}^{n}(u\ominus_- s_i) \big)\Big)=\Big(x\ominus_- \bigoplus_{i=2}^{n}(u\ominus_- s_i)\Big)\wedge u.
$$
Since $x\leq n.(u\ominus_- v)\leq \bigoplus_{i=1}^{n}(u\ominus_- s_i)$, then by Remark \ref{ee0}(i),
$0=x\ominus_-\big(\bigoplus_{i=1}^{n}(u\ominus_- s_i)\big)=\big(x\ominus_-(\bigoplus_{i=2}^{n}(u\ominus_- s_i))\big)\ominus_- (u\ominus_- s_1)$ and so
$x\ominus_- \bigoplus_{i=2}^{n}(u\ominus_- s_i)\leq u\ominus_- s_1\leq u$.
It follows that $0=\big(x\ominus_- \bigoplus_{i=2}^{n}(u\ominus_- s_i) \big)\wedge u=
x\ominus_- \bigoplus_{i=2}^{n}(u\ominus_- s_i)$. Now, we return to (\ref{R1}), repeating this process, it can be easily shown that
$x=0$, which is a contradiction. Therefore, $u$ is the least upper bound for $S$ in $M_2$.
\end{proof}

\begin{lem}\label{3.6}
Let $(M_2;\oplus,^-,^\sim,0,1)$ and $(M_1;\oplus,^-,^\sim,0,1)$ be strongly projectable pseudo $MV$-algebras such that $M_1$ is a subalgebra of $M_2$.
Then, for each ideal $I\in \rho(M_1)$, there is a unique Boolean element $b\in M_1$ such that
$I=\downarrow_{_{M_1}} b$ and $(I^{\bot_{M_1}})^{\bot_{M_2}}=\downarrow_{_{M_2}} b$.
\end{lem}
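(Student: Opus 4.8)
The plan is to reduce the whole statement to the description of summand-ideals by Boolean elements obtained in Corollary~\ref{3.2.1}, and then to compute the double polar $(I^{\bot_{M_1}})^{\bot_{M_2}}$ by hand, using that the top element of a principal down-set and the Boolean complement behave the same way in $M_1$ and in $M_2$ because $M_1$ is a \emph{subalgebra}.

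First I would pass from the polar ideal $I$ to a Boolean element. Since $M_1$ is strongly projectable, $\rho(M_1)\s\mathfrak{Sum}(M_1)$, so $I$ is a summand-ideal of $M_1$. By Corollary~\ref{3.2.1}(i) there is a unique Boolean element $b\in M_1$ with $I=\downarrow_{M_1} b$, and moreover $I^{\bot_{M_1}}=\downarrow_{M_1} b'$, where $b':=b^-=b^\sim$ (recall $b^-=b^\sim$ for Boolean $b$). This already gives the first identity $I=\downarrow_{M_1} b$ together with the uniqueness asserted in the statement (any Boolean generator of $I$ in $M_1$ is unique by Proposition~\ref{3.2}(vii)); it remains to identify $(I^{\bot_{M_1}})^{\bot_{M_2}}$ with $\downarrow_{M_2} b$.

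Next I would collapse the $M_2$-polar of the principal down-set. Because $b'\in\downarrow_{M_1} b'$ and $y\wedge x\le y\wedge b'$ whenever $x\le b'$, one gets $(\downarrow_{M_1} b')^{\bot_{M_2}}=\{y\in M_2\mid y\wedge b'=0\}=(b')^{\bot_{M_2}}$, so $(I^{\bot_{M_1}})^{\bot_{M_2}}=(b')^{\bot_{M_2}}$. Then I would compute $(b')^{\bot_{M_2}}=\downarrow_{M_2} b$: since $M_1\le M_2$, the operations $\oplus,^-,^\sim$ agree on $M_1$, so $b$ is still Boolean in $M_2$ with complement $b'$, and in particular $b\wedge b'=0$ and $b\vee b'=b\oplus b'=1$ hold in $M_2$. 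The inclusion $\downarrow_{M_2} b\s(b')^{\bot_{M_2}}$ follows from $b\wedge b'=0$ and downward closure. For the reverse inclusion, given $y\in M_2$ with $y\wedge b'=0$, apply the inequality $y\wedge(b\oplus b')\le(y\wedge b)\oplus(y\wedge b')$ from \cite[Prop. 1.17]{Georgescu}: $y=y\wedge 1=y\wedge(b\oplus b')\le(y\wedge b)\oplus 0=y\wedge b\le b$, hence $y\in\downarrow_{M_2} b$. Combining, $(I^{\bot_{M_1}})^{\bot_{M_2}}=\downarrow_{M_2} b$, which is what we want.

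The calculation is short, and the only place requiring care — the ``main obstacle'' such as it is — is the bookkeeping in the previous paragraph: one must keep the polar taken in $M_1$ distinct from the polar taken in $M_2$, and one must explicitly invoke that $M_1$ is a subalgebra, so that Boolean-ness of $b$, the identity $b'=b^-=b^\sim$, and $b\oplus b'=1$ persist in $M_2$. (Note that strong projectability of $M_2$ is not actually used in this lemma; only that of $M_1$.)
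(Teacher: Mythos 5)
Your proof is correct, but it follows a genuinely different route from the paper's. The paper, after extracting the Boolean element $b$ with $I=\downarrow_{M_1}b$ and $I^{\bot_{M_1}}=\downarrow_{M_1}b'$ exactly as you do, invokes the \emph{strong projectability of $M_2$}: since $(I^{\bot_{M_1}})^{\bot_{M_2}}\in\rho(M_2)$ it is a summand-ideal of $M_2$, so $b'$ decomposes uniquely as $u\oplus v$ with $u\in (I^{\bot_{M_1}})^{\bot_{M_2}}$ and $v$ in its polar; from $u\le b'$ and $u\wedge b'=0$ one gets $u=0$, hence $b'$ lies in the polar of $(I^{\bot_{M_1}})^{\bot_{M_2}}$, and the uniqueness of the decomposition $1=b\oplus b'$ then identifies $(I^{\bot_{M_1}})^{\bot_{M_2}}=\downarrow_{M_2}b$ via Corollary~\ref{3.2.1}. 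You instead compute the double polar directly: $(\downarrow_{M_1}b')^{\bot_{M_2}}=(b')^{\bot_{M_2}}$ because $b'$ is the largest element of the down-set, and $(b')^{\bot_{M_2}}=\downarrow_{M_2}b$ by the inequality $y\wedge(b\oplus b')\le(y\wedge b)\oplus(y\wedge b')$ from \cite[Prop.~1.17]{Georgescu}, all of which persists in $M_2$ because $M_1$ is a subalgebra. Both arguments are sound; yours is more elementary and, as you correctly note, shows that the hypothesis that $M_2$ be strongly projectable is not actually needed for this lemma (only the strong projectability of $M_1$ is used), which is a mild but genuine sharpening of the statement. The only point to watch is the one you already flag: keeping $\bot_{M_1}$ and $\bot_{M_2}$ distinct throughout, which you do.
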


\begin{proof}
Let $I$ be a polar ideal of $M_1$; then $I$ is normal. By Proposition \ref{3.2}(vi) and Corollary \ref{3.2.1}(ii),
there is $b\in M_1$ such that $b\oplus b=b$, $I=\downarrow_{_{M_1}} b$,
$I^{\bot_{M_1}}=\downarrow_{_{M_1}} b'$, and $b\oplus b'=1$ is the only decomposition of $1$ in $M_1=I\boxplus  I^{\bot_{M_1}}$.
Clearly, $(I^{\bot_{M_1}})^{\bot_{M_2}}\in \rho(M_2)$ and $b\in I\s (I^{\bot_{M_1}})^{\bot_{M_2}}$ (since $I$ is a polar ideal of $M_1$),
so by
Proposition \ref{3.2}(vi), there are unique elements $u\in (I^{\bot_{M_1}})^{\bot_{M_2}}$ and $v\in ((I^{\bot_{M_1}})^{\bot_{M_2}})^{\bot_{M_2}}$
such that $b'=u\oplus v$. Since $u\in (I^{\bot_{M_1}})^{\bot_{M_2}}$, and $b'\in I^{\bot_{M_1}}$ then $u\wedge b'=0$ and so
$u=0$ (since $b'=u\oplus v$ implies that $u\leq b'$). That is, $b'=v\in ((I^{\bot_{M_1}})^{\bot_{M_2}})^{\bot_{M_2}}$ and hence
by $b\oplus b'=1$ and Proposition \ref{3.2}(vi),(vii) and Corollary \ref{3.2.1},
$\downarrow_{_{M_2}} b=(I^{\bot_{M_1}})^{\bot_{M_2}}$ and $\downarrow_{_{M_2}} b'=((I^{\bot_{M_1}})^{\bot_{M_2}})^{\bot_{M_2}}$.
\end{proof}

Similarly to the proof of \cite[Thm. 8.1.1]{Anderson}, we can show the following lemma.
In fact, the proof of \cite[Thm. 8.1.1]{Anderson} works also for pseudo $MV$-algebras.

\begin{lem}\label{3.7}
If $(M_2;\oplus,^-,^\sim,0,1)$ is an essential extension for a pseudo $MV$-algebra $(M_1;\oplus,^-,^\sim,0,1)$, then
$\rho(M_1)$ and $\rho(M_2)$ are lattice isomorphic under the map $\Phi: \rho(M_2)\ra \rho(M_1)$ and
$\Psi:\rho(M_1)\ra \rho(M_2)$ defined by $\Phi(I)=I\cap M_1$ and $\Psi(J)=(J^{\bot_{M_1}})^{\bot_{M_2}}$.
\end{lem}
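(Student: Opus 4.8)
The plan is to mimic the classical $\ell$-group argument of \cite[Thm.~8.1.1]{Anderson}, transported through the $\Gamma$-functor of Theorem~\ref{functor} and the ideal-correspondence of Theorem~\ref{ideals}; the essential extension $M_1\leq M_2$ of pseudo $MV$-algebras corresponds to an essential extension $(G_1,u)\leq (G_2,u)$ of the representing unital $\ell$-groups (here the strong units agree because $M_1$ is a subalgebra, so $u=1$ is shared). First I would verify that $\Phi$ and $\Psi$ are well defined: for $I\in\rho(M_2)$, the set $I\cap M_1$ is a polar ideal of $M_1$ — closure under $\oplus$ and being a down-set are immediate, and $I\cap M_1 = X^{\bot_{M_2}}\cap M_1 = X^{\bot_{M_1}}$ when $X\s M_1$, which forces $(I\cap M_1)^{\bot_{M_1}\bot_{M_1}} = I\cap M_1$ using essentiality to compare polars across $M_1$ and $M_2$; and for $J\in\rho(M_1)$, $\Psi(J)=(J^{\bot_{M_1}})^{\bot_{M_2}}$ is a polar ideal of $M_2$ by construction. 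Monotonicity of both maps with respect to $\s$ is clear.

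Next I would show $\Phi$ and $\Psi$ are mutually inverse. For the composite $\Phi\Psi$: given $J\in\rho(M_1)$, one must prove $(J^{\bot_{M_1}})^{\bot_{M_2}}\cap M_1 = J$. The inclusion $\supseteq$ is clear since $J\s M_1$ and $J\s (J^{\bot_{M_1}})^{\bot_{M_2}}$. For $\subseteq$, take $x\in (J^{\bot_{M_1}})^{\bot_{M_2}}\cap M_1$; then $x\in M_1$ and $x\wedge y=0$ for every $y\in J^{\bot_{M_1}}$, so $x\in (J^{\bot_{M_1}})^{\bot_{M_1}} = J$ because $J$ is a polar ideal of $M_1$. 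For the composite $\Psi\Phi$: given $I\in\rho(M_2)$, write $I = X^{\bot_{M_2}}$ for some $X$; I would like $((I\cap M_1)^{\bot_{M_1}})^{\bot_{M_2}} = I$. The heart of this is the claim that for a polar $I$ of $M_2$ and a large subalgebra $M_1$, the polar $(I\cap M_1)^{\bot_{M_1}}$, re-polarized in $M_2$, recovers $I$; equivalently $(I\cap M_1)^{\bot_{M_2}} = I^{\bot_{M_2}}$, which reduces to showing that $I\cap M_1$ is ``large inside $I$'' in the sense that every nonzero $z\in I$ dominates (a multiple of) some nonzero element of $I\cap M_1$. This is exactly where essentiality is used: given $0<z\in I$, essentiality of $M_2$ over $M_1$ gives $0<x\le n.z$ with $x\in M_1$; since $I$ is a down-set and an ideal, $n.z\in I$, hence $x\in I\cap M_1$, and $x\ne 0$.

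From the two polar identities the bijection follows, and since both maps are order-preserving bijections between posets they are lattice isomorphisms automatically (an order isomorphism between lattices preserves $\vee$ and $\wedge$); alternatively I would note directly that in a Boolean algebra of polars, $\Psi$ and $\Phi$ carry $\bot$ to $\bot$ and meets to meets, so joins are handled by De Morgan. The main obstacle is the verification of $\Psi\Phi=\mathrm{id}$, i.e. the density claim ``$I\cap M_1$ is large in $I$'' together with the compatibility of the three polar operations $\bot_{M_1}$, $\bot_{M_2}$ on subsets lying in $M_1$; once one observes that a polar in $M_2$ of a subset of $M_1$ restricts to the polar in $M_1$, and that essentiality lets every nonzero element of $M_2$ be ``tested'' by $M_1$, the rest is bookkeeping. (Lemma~\ref{3.6} is the additional ingredient needed only when both algebras are strongly projectable, where the polar ideals become principal; for the present lemma the general polar-ideal framework suffices.)
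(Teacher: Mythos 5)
Your proposal is correct and follows the same route the paper takes: the paper gives no independent argument for this lemma but simply asserts that the proof of \cite[Thm. 8.1.1]{Anderson} for $\ell$-groups carries over to pseudo $MV$-algebras, and your write-up is exactly that transfer (well-definedness of $\Phi$ and $\Psi$, $\Phi\Psi=\mathrm{id}$ from $J$ being a polar of $M_1$, and $\Psi\Phi=\mathrm{id}$ via the density of $I\cap M_1$ in $I$ supplied by essentiality). The only step you leave implicit is that $0<x\le n.w$ forces $x\wedge w\ne 0$, which follows from $x = x\wedge n.w \le n.(x\wedge w)$ by \cite[Prop. 1.17]{Georgescu}; with that one-line observation your density claim does yield $(I\cap M_1)^{\bot_{M_2}}=I^{\bot_{M_2}}$ and hence $\Psi\Phi=\mathrm{id}$.
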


\begin{cor}\label{3.8}
Let $(M_2;\oplus,^-,^\sim,0,1)$ and $(M_1;\oplus,^-,^\sim,0,1)$ be strongly projectable pseudo $MV$-algebras such that $M_1$ is a large subalgebra of $M_2$.
\begin{itemize}
\item[{\rm(i)}] For each polar ideal $I$ of $M_2$, there is a unique Boolean element $a\in M_1\cap I$ such that $I=\downarrow_{M_2} a$.

\item[{\rm(ii)}] Let $I\in \rho(M_1)$. Then there is $a\in B(M_1)$ such that $I=\downarrow_{_{M_1}}a$ if and only if $(I^{\bot_{M_1}})^{\bot_{M_2}}=\downarrow_{_{M_2}}a$.
\end{itemize}
\end{cor}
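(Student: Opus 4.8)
The plan is to derive Corollary~\ref{3.8} directly from Lemma~\ref{3.6} and Lemma~\ref{3.7}, exploiting the fact that a large subalgebra is in particular an essential extension, so both preceding results apply to the pair $M_1 \le M_2$. The two parts of the corollary are essentially the two lemmas recombined, with the extra ingredient that \emph{every} polar ideal of $M_1$ (resp. $M_2$) is principal, generated by a Boolean element, because $M_1$ and $M_2$ are strongly projectable (Corollary~\ref{3.2.1}(i) together with the definition of strongly projectable).

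For part (i), I would start with a polar ideal $I \in \rho(M_2)$. Since $M_2$ is strongly projectable, $I$ is a summand-ideal, so by Proposition~\ref{3.2}(vii) (or Corollary~\ref{3.2.1}(i)) there is a unique Boolean element $a \in I \cap B(M_2)$ with $I = \downarrow_{M_2} a$. The point that needs work is showing $a$ can be chosen in $M_1$. Apply Lemma~\ref{3.7}: the map $\Phi(I) = I \cap M_1$ is a lattice isomorphism $\rho(M_2) \to \rho(M_1)$ with inverse $\Psi(J) = (J^{\bot_{M_1}})^{\bot_{M_2}}$. So $J := I \cap M_1 \in \rho(M_1)$, and since $M_1$ is strongly projectable, $J = \downarrow_{M_1} b$ for a unique Boolean $b \in J \cap B(M_1)$. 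Now $I = \Psi(J) = (J^{\bot_{M_1}})^{\bot_{M_2}}$, and Lemma~\ref{3.6} applied to this $J \in \rho(M_1)$ gives exactly $(J^{\bot_{M_1}})^{\bot_{M_2}} = \downarrow_{M_2} b$. Hence $I = \downarrow_{M_2} b$ with $b \in M_1 \cap I$ Boolean; uniqueness of $b$ inside $M_2$ comes from Proposition~\ref{3.2}(vii), and since any such generator of $I$ lying in $M_1$ must equal $b$ by the same uniqueness, we get the stated unique $a := b \in M_1 \cap I$.

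Part (ii) is a direct restatement of Lemma~\ref{3.6} plus a converse. The forward direction: if $I \in \rho(M_1)$ satisfies $I = \downarrow_{M_1} a$ for a Boolean $a \in B(M_1)$, then Lemma~\ref{3.6} yields $(I^{\bot_{M_1}})^{\bot_{M_2}} = \downarrow_{M_2} a$ (the uniqueness of $a$ as the generator in Lemma~\ref{3.6} means it is the same Boolean element). For the converse, suppose $(I^{\bot_{M_1}})^{\bot_{M_2}} = \downarrow_{M_2} a$ for some element $a$; note $a = a \oplus a$ because $\downarrow_{M_2} a$ is a summand-ideal, so $a \in B(M_2)$. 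I would then apply Lemma~\ref{3.6} to $I$ to produce the unique Boolean $b \in M_1$ with $I = \downarrow_{M_1} b$ and $(I^{\bot_{M_1}})^{\bot_{M_2}} = \downarrow_{M_2} b$; comparing $\downarrow_{M_2} a = \downarrow_{M_2} b$ and using uniqueness of the generating Boolean element of a summand-ideal of $M_2$ (Proposition~\ref{3.2}(vii)) gives $a = b \in B(M_1)$, so $I = \downarrow_{M_1} a$ with $a$ Boolean in $M_1$, as required.

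The main obstacle is bookkeeping rather than a genuine difficulty: one must be careful that the ``unique Boolean element'' produced independently by Proposition~\ref{3.2}(vii) inside $M_2$, by Lemma~\ref{3.6}, and by Lemma~\ref{3.7} are all literally the \emph{same} element, and in particular that it lies in $M_1$. This is secured by the uniqueness clauses of Proposition~\ref{3.2}(vi)--(vii): a summand-ideal of a given pseudo $MV$-algebra has exactly one Boolean generator, so any two descriptions of the same ideal must exhibit the same generator. Once that identification is made, both parts follow without further computation.
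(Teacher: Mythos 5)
Your proposal is correct and follows essentially the same route as the paper: both parts are obtained by combining Lemma \ref{3.6} with the polar-ideal isomorphism of Lemma \ref{3.7} and the uniqueness of the Boolean generator of a summand-ideal (Proposition \ref{3.2}(vi)--(vii), Corollary \ref{3.2.1}(i)). The only cosmetic difference is in the converse of (ii), where the paper computes $I=M_1\cap\downarrow_{M_2}a=\downarrow_{M_1}a$ via part (i) and the map $\Phi$, while you re-apply Lemma \ref{3.6} and match generators by uniqueness; both are valid.
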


\begin{proof}
(i) Let $J$ be a polar ideal of $M_2$. By Lemma \ref{3.7}, $J=((J\cap M_1)^{\bot_{M_1}})^{\bot_{M_2}}$ and $J\cap M_1$ is a polar ideal of $M_1$.
By Lemma \ref{3.6}, there is a unique element $a\in B(M_2)$ such that $J\cap M_1=\downarrow_{_{M_1}}a$ and
$(J\cap M_1)^{\bot_{M_1}})^{\bot_{M_2}}=\downarrow_{_{M_2}}a$. Therefore, $J=\downarrow_{_{M_2}}a$.

(ii) By Lemma \ref{3.6}, if $I=\downarrow_{_{M_1}}a$, then
$(I^{\bot_{M_1}})^{\bot_{M_2}}=\downarrow_{_{M_2}}a$. Conversely, if
$(I^{\bot_{M_1}})^{\bot_{M_2}}=\downarrow_{_{M_2}}a$, then by (i),
$a\in M_1\cap \downarrow_{_{M_2}}a$ and by Lemma \ref{3.7},
$\downarrow_{_{M_1}}a=M_1\cap \downarrow_{_{M_2}}a=M_1\cap (I^{\bot_{M_1}})^{\bot_{M_2}}=I$.
\end{proof}

\begin{thm}\label{3.9}
If $(A;\oplus,^-,^\sim,0,1)$ is a large pseudo $MV$-subalgebra of a strongly projectable pseudo $MV$-algebra $(B;\oplus,^-,^\sim,0,1)$ and $\{M_i\}_{i\in J}$ is the set of all
strongly projectable pseudo $MV$-subalgebras of $B$ containing $A$, then $M:=\bigcap_{i\in J}M_i$ is a strongly projectable pseudo $MV$-algebra containing $A$.
\end{thm}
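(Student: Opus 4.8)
\emph{Proof plan.} The plan is to verify the only non-formal part of the statement, namely that the intersection $M=\bigcap_{i\in J}M_i$ is strongly projectable; that $M$ is a pseudo $MV$-subalgebra of $B$ containing $A$ is immediate once one observes that the family $\{M_i\}_{i\in J}$ is non-empty, since $B$ itself is strongly projectable and contains $A$, so $B\in\{M_i\}_{i\in J}$, and each $M_i$ contains $A$. I would also record at the outset the elementary fact that any subalgebra of $B$ that contains $A$ is again a large subalgebra of $B$: if $0\neq y\in B$, the element $0\neq x\in A$ with $x\leq n.y$ witnessing largeness of $A$ in $B$ already lies in the intermediate subalgebra. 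In particular every $M_i$ is a large subalgebra of $B$.

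To prove strong projectability, fix an arbitrary polar ideal $P\in\rho(M)$, so $P=(P^{\bot_M})^{\bot_M}$. The key move is to pass to the ambient algebra $B$: set $Q:=(P^{\bot_M})^{\bot_B}$, which is a polar ideal of $B$ because $P^{\bot_M}\s M\s B$. As $B$ is strongly projectable, $Q$ is a summand-ideal of $B$, so by Corollary \ref{3.2.1}(i) there is a unique Boolean element $c\in B$ with $Q=\downarrow_B c$. I claim $c\in M$. For each fixed $i\in J$, the pair $M_i\leq B$ consists of strongly projectable pseudo $MV$-algebras with $M_i$ a large subalgebra of $B$, so Corollary \ref{3.8}(i) applied to the polar ideal $Q$ of $B$ produces a unique Boolean element $a_i\in M_i$ with $Q=\downarrow_B a_i$; by the uniqueness of the Boolean generator in Corollary \ref{3.2.1}(i) we get $a_i=c$, hence $c\in M_i$. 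As this holds for every $i\in J$, $c\in\bigcap_{i\in J}M_i=M$.

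It then remains to identify $P$ with the principal ideal $\downarrow_M c$. Note first that $c$, being a Boolean element of $B$ that lies in the subalgebra $M$, is a Boolean element of $M$. For the inclusion $P\s\downarrow_M c$: if $x\in P$ then $x\wedge y=0$ for every $y\in P^{\bot_M}$ (since $P=(P^{\bot_M})^{\bot_M}$), so $x\in(P^{\bot_M})^{\bot_B}=\downarrow_B c$, i.e. $x\leq c$, and as $x,c\in M$ this gives $x\in\downarrow_M c$. For the reverse inclusion: if $x\in M$ with $x\leq c$, then for each $y\in P^{\bot_M}$ one has $x\wedge y\leq c\wedge y=0$ because $c\in Q=(P^{\bot_M})^{\bot_B}$, so $x\in(P^{\bot_M})^{\bot_M}=P$. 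Hence $P=\downarrow_M c$ with $c\in B(M)$, and Corollary \ref{3.2.1}(i) shows $P$ is a summand-ideal of $M$. Since $P$ was an arbitrary polar ideal, $\rho(M)\s\mathfrak{Sum}(M)$, i.e. $M$ is strongly projectable.

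I expect the main obstacle to be precisely the step $c\in M$. Neither $A$ nor, a priori, $M$ is assumed strongly projectable, so Lemma \ref{3.6} and Corollary \ref{3.8} cannot be applied to the pairs $A\leq M$ or $M\leq M_i$ directly, and one cannot simply ``compute'' the polar $P$ inside $M$. The resolution is to realize $P$ as the trace on $M$ of the polar ideal $Q$ of the strongly projectable overalgebra $B$, to apply Corollary \ref{3.8}(i) to the legitimately strongly projectable pairs $M_i\leq B$ in order to pin the Boolean generator of $Q$ into each $M_i$, and then to invoke the uniqueness of that generator so that one and the same element $c$ serves for all $i$ and therefore lies in the intersection. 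Everything else is routine bookkeeping with polars and with the distributive lattice reduct.
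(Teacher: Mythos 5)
Your proof is correct and follows essentially the same route as the paper: both realize the polar ideal of $M$ as the trace of a Boolean-generated polar of $B$ and use the uniqueness of the Boolean generator (via Corollary \ref{3.2.1}/\ref{3.8}) to force that generator into every $M_i$, hence into $M$. Your version is slightly streamlined in that you work directly with $Q=(P^{\bot_M})^{\bot_B}$ and Corollary \ref{3.8}(i), where the paper routes through the intermediate polars $(I^{\bot_M})^{\bot_{M_i}}$ and Corollary \ref{3.8}(ii), but the key idea is identical.
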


\begin{proof}
Let $I$ be a polar ideal of $M$. By Corollary \ref{3.2.1}, it suffices to show that there is a Boolean element $a\in B$ such that
$I=\downarrow_{M} a$. By Lemma \ref{3.7}, for each $i\in J$, $(I^{\bot_{M}})^{\bot_{M_i}}$ is a
polar ideal of $M_i$ and so by Corollary \ref{3.8}(ii),
there is a unique Boolean element $a_i\in M_i$ such that $(((I^{\bot_{M}})^{\bot_{M_i}})^{\bot_{M_i}})^{\bot_{B}}=\downarrow_{_{B}}a_i$ and
$(I^{\bot_{M}})^{\bot_{M_i}}=\downarrow_{_{M_i}}a_i$. Also,
$(((I^{\bot_{M}})^{\bot_{M_i}})^{\bot_{M_i}})^{\bot_{B}}\cap M=(((I^{\bot_{M}})^{\bot_{M_i}})^{\bot_{M_i}})^{\bot_{B}}\cap M_i\cap M=
(I^{\bot_{M}})^{\bot_{M_i}}\cap M=I$ and $(I^{\bot_{M}})^{\bot_{B}}\cap M=I$ and so by Lemma \ref{3.7},
$(((I^{\bot_{M}})^{\bot_{M_i}})^{\bot_{M_i}})^{\bot_{B}}=(I^{\bot_{M}})^{\bot_{B}}$ (since $\Phi:\rho(B)\ra \rho(M)$ is one-to-one).
It follows that $a_i=a_j=:a\in M$ for all $i,j\in J$, that is there exists a unique Boolean element $a\in M$ such that
$(I^{\bot_{M}})^{\bot_{M_i}}=\downarrow_{_{M_i}}a$. Hence,
$\bigcap_{i\in J} \downarrow_{_{M_i}}a = \bigcap_{i\in J} (I^{\bot_{M}})^{\bot_{M_i}}$.
Also, $\bigcap_{i\in J} \downarrow_{_{M_i}}a=\{x\in B\mid x\leq a,\ x\in M_i,\ \forall i\in J\}=\downarrow_{_{M}}a$ and
$\bigcap_{i\in J}(I^{\bot_{M}})^{\bot_{M_i}}=\bigcap_{i\in J}(I^{\bot_{M}})^{\bot_{M_i}}\cap M=I$,
so $I=\downarrow_{_{M}}a$ which proves that $M$ is a strongly projectable pseudo $MV$-algebra.
\end{proof}

\begin{rmk}\label{3.9.1}
Similarly to the proof of Theorem \ref{3.9}, we can show that if
$(A;\oplus,^-,^\sim,0,1)$ is a large subalgebra of a pseudo $MV$-algebra
$(M;\oplus,^-,^\sim,0,1)$ and
$B$ and $C$ are strongly projectable pseudo $MV$-subalgebra of $M$ containing $A$, then $B\cap C$ is also a strongly projectable pseudo $MV$-algebra.
\end{rmk}

\begin{defn}
A minimal orthocomplete pseudo $MV$-algebra containing $M$ as a large pseudo $MV$-subalgebra is called an {\it orthocompletion} for $M$.
\end{defn}

\begin{rmk}\label{rmk}
Let $G$ be a representable $\ell$-group. We recall that $O(G)$ is an orthocomplete $\ell$-group constructed by the following process
(for more details, we refer to \cite{Anderson,Conrad1,Darnel}).
Let $\phi: G\hookrightarrow \prod_{\lambda\in \Lambda} G_\lambda$ be a subdirect embedding,
where $G_\lambda$ is a totally ordered $\ell$-group for all $\lambda\in\Lambda$.
Suppose that $B(\Lambda)=\{Supp(I)\mid I\in \rho(G)\}$, where $Supp(X)=\bigcup\{Supp(x)\mid x\in X\}$ and
$Supp(x)=\{\lambda\in\Lambda \mid x(\lambda)\neq 0\}$ for
each subset $X$ of $G$ and each $x\in G$. Let $\{f_\alpha\}_{\alpha\in \Omega}\s G$, $D(G)$ be the set of all maximal pairwise disjoint
subsets of $B(\Lambda)$ and $\{F_\alpha\}_{\alpha\in \Omega}\in D(G)$, we say that $\{f_\alpha,F_\alpha\}_{\alpha\in \Omega}$ {\it underlines}  an
element $x\in G$ if, for each $\alpha\in\Omega$ and each $\lambda\in F_\alpha$, $x(\lambda)=f_\alpha(\lambda)$.
Set $L=\{f\in \prod_{\lambda\in \Lambda} G_\lambda\mid \mbox{there exists $\{g_\alpha,G_\alpha\}_{\alpha\in \Omega}$ underlying $f$ }\}$.
Then $L$ is an $\ell$-subgroup of $\prod_{\lambda\in \Lambda} G_\lambda$ and the relation $\theta$, which is defined by
$(x,y)\in \theta$ if and only if
there exists $\{f_\alpha,F_\alpha\}_{\alpha\in \Sigma}$ underlines both
$x$ and $y$, is a congruence relation on the
$\ell$-group $L$, so $L/\theta$ (the set of all equivalence classes of $L$ under $\theta$) is an $\ell$-group and $O(G):=L/\theta$.
Let $\pi:L\ra O(G)$ be the natural projection map. Then $Im(\phi)\s L$ and $\xi_{_G}:=\pi\circ \phi:G\ra O(G)$ is an injective $\ell$-group homomorphism.
In fact, $O(G)$ is an orthocompletion for $\xi_{_G}(G)$ (note that $\xi_{_G}(G)\cong G$). From now on, in this paper, we suppose
$G$ is an $\ell$-subgroup of $O(G)$.
\end{rmk}

\begin{thm}\label{ortho}
Each representable pseudo $MV$-algebra has an orthocompletion. Moreover, any two such orthocompletions are isomorphic.
\end{thm}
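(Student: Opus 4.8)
The plan is to transport Bernau's orthocompletion of $\ell$-groups through the functor $\Gamma$. Let $A$ be a representable pseudo $MV$-algebra and write $A\cong\Gamma(G_A,u_A)$ with $(G_A,u_A)=\Xi(A)$; since $A$ is representable so is $G_A$, hence by Remark \ref{rmk} it has an orthocompletion $O(G_A)$ with $G_A\leq O(G_A)$ an essential extension. As $u_A$ need not remain a strong unit of $O(G_A)$, one works with the interval: with $H$ the convex $\ell$-subgroup of $O(G_A)$ generated by $u_A$, the set $B:=[0,u_A]=\Gamma(H,u_A)$ (the bracket computed in $O(G_A)$) is a pseudo $MV$-algebra containing $A$ as a subalgebra. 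I will show (1) $A$ is a large subalgebra of $B$; (2) $B$ is orthocomplete; (3) the intersection $O(A)$ of all orthocomplete pseudo $MV$-subalgebras of $B$ that contain $A$ is an orthocompletion of $A$; (4) every orthocompletion of $A$ is isomorphic to $O(A)$.

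For (1): given $0<y\leq u_A$ in $B$, essentiality of $G_A$ in $O(G_A)$ yields $0<g\in G_A$ and $n\in\mathbb{N}$ with $g\leq n.y\leq nu_A$; from $g>0$, $g\leq nu_A$ and ($\ell$iii) one gets $g\wedge u_A>0$ (otherwise $g\wedge u_A=0$ forces $g\wedge nu_A=0$, i.e. $g=0$), hence $g\wedge u_A\in A\setminus\{0\}$ and $g\wedge u_A\leq n.y$. For (2): a disjoint subset of $B\setminus\{0\}$ is a disjoint subset of $O(G_A)^{+}$, hence has a supremum in the orthocomplete $\ell$-group $O(G_A)$; that supremum is $\leq u_A$, so it lies in $B$ and is the supremum of the set in $B$. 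For strong projectability, every polar ideal of $B$ has the form $X^{\bot_B}$ with $X\subseteq B$; since $O(G_A)$ is orthocomplete it is strongly projectable as an $\ell$-group, so $O(G_A)=X^{\bot}\boxplus X^{\bot\bot}$ (polars taken in $O(G_A)$), and writing $u_A=e'+e$ with $e'\in(X^{\bot})^{+}$ and $e\in(X^{\bot\bot})^{+}$ we get $e'\wedge e=0$, so $e'$ is a Boolean element of $B$; using that the projection onto a summand is order preserving one checks $X^{\bot_B}=\downarrow e'$, which is a summand-ideal by Corollary \ref{3.2.1}(i). Thus $\rho(B)\subseteq\mathfrak{Sum}(B)$.

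For (3): every orthocomplete subalgebra $D$ with $A\leq D\leq B$ is in particular strongly projectable, and $A$ is large in $B$, so the proof of Theorem \ref{3.9} goes through for the family of all such $D$ (it uses only that the members are strongly projectable subalgebras containing the large subalgebra $A$, not that the family consists of all strongly projectable ones), giving that $O(A)$ is strongly projectable. Each such $D$ is also a large subalgebra of $B$, so by Proposition \ref{ee} the supremum in $D$ of a disjoint family $S\subseteq O(A)$ equals its supremum in $B$; this common element therefore lies in every $D$, hence in $O(A)$, and is the supremum of $S$ in $O(A)$. So $O(A)$ is orthocomplete, it contains $A$ as a large subalgebra, and by construction it has no proper orthocomplete subalgebra containing $A$; hence $O(A)$ is an orthocompletion of $A$.

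For (4): let $O'$ be any orthocompletion of $A$ and put $(K',u_A)=\Xi(O')$; by Theorem \ref{functor} we may take $G_A\leq K'$, and $K'$ is representable since $O'$, being orthocomplete, is strongly projectable and hence representable. As in (1), largeness of $A$ in $O'$ transfers to essentiality of $G_A$ in $K'$. By Remark \ref{rmk}, $K'$ has an orthocompletion $O(K')$; since essentiality of extensions is transitive, $O(K')$ is an orthocomplete essential extension of $G_A$, as is $O(G_A)$, so by the uniqueness clause of Bernau's theorem there is an isomorphism $O(K')\cong O(G_A)$ fixing $G_A$ pointwise. Restricting it to $K'$ and applying $\Gamma$ exhibits $O'=\Gamma(K',u_A)$ as an orthocomplete subalgebra $\widehat{O'}$ of $B$ that contains $A$; by (3) we get $O(A)\subseteq\widehat{O'}$, while $O(A)$ is an orthocomplete subalgebra of $\widehat{O'}\cong O'$ containing $A$ and $O'$ is minimal, so $\widehat{O'}=O(A)$ and $O'\cong O(A)$. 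Hence any two orthocompletions of $A$ are isomorphic. The main obstacle is the two-sided translation used in (3)--(4): verifying that the summand-ideal calculus of Theorem \ref{3.9} survives passage to an arbitrary intersection, and that ``orthocomplete plus large'' for pseudo $MV$-algebras corresponds exactly to ``orthocomplete plus essential'' for their representing $\ell$-groups — compounded by the fact that $u_A$ need not stay a strong unit of $O(G_A)$, which is why one argues throughout inside the interval $[0,u_A]$.
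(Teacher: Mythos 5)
Your existence argument (steps (1)--(3)) is essentially sound and follows the paper's route: both form $B=[0,u_A]$ inside $O(G_A)$ and verify largeness, suprema of disjoint sets, and strong projectability there. Where you diverge is in extracting the \emph{minimal} orthocomplete extension: the paper shows that $B$ itself is already minimal, using the underlying-family description of $O(G_A)$ (every element of $B$ is the supremum of a disjoint family taken from $\Gamma(G_A,u_A)$, so any orthocomplete subalgebra between $A$ and $B$ equals $B$), whereas you take the intersection of all orthocomplete subalgebras of $B$ containing $A$ and argue it is orthocomplete via Theorem \ref{3.9} and Proposition \ref{ee}. That variation is workable (the proof of Theorem \ref{3.9} indeed only uses that the members of the family are strongly projectable subalgebras containing the large subalgebra $A$, cf.\ Remark \ref{3.9.1}), though it proves slightly less than the paper, which identifies the orthocompletion concretely as all of $\Gamma(O(G_A),u_A)$.

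Step (4) has a genuine gap. You infer that $O(K')\cong O(G_A)$ over $G_A$ because both are orthocomplete essential extensions of $G_A$, ``by the uniqueness clause of Bernau's theorem.'' But that uniqueness applies only to \emph{orthocompletions}, i.e.\ minimal orthocomplete essential extensions, and $O(K')$ is minimal only over $K'$, not over $G_A$. The inference fails already for $\mathbb{Z}\leq\mathbb{R}$: this is an essential extension, both groups are linearly ordered and hence orthocomplete (only trivial polars, only singleton disjoint sets), so $O(\mathbb{Z})=\mathbb{Z}$ and $O(\mathbb{R})=\mathbb{R}$ are both orthocomplete essential extensions of $\mathbb{Z}$, yet they are not isomorphic. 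Since the rest of (4) — in particular realizing $O'$ as a subalgebra $\widehat{O'}$ of $B$ — rests entirely on this isomorphism, uniqueness is not established. The repair is to run the embedding in the other direction: because $O'$ is orthocomplete, $K'$ is its own orthocompletion, and Remark \ref{3.10} gives an injective $\ell$-homomorphism $\mu:O(G_A)\to O(K')=K'$ over $G_A$; applying $\Gamma$ (Theorem \ref{functor}) embeds $B$ into $O'$ over $A$ as an orthocomplete subalgebra, and minimality of $O'$ then forces $O'\cong B$. This is, in substance, what the paper's uniqueness argument does.
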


\begin{proof}
Let $(A;\oplus,^-,^\sim,0,1)$ be a representable pseudo $MV$-algebra. By Theorem \ref{functor}, there exists a representable
$\ell$-group $(G_A;+,0)$ with strong unit $u_A$ such that
$A\cong \Gamma(G_A,u_A)$. Since $G_A$ is representable
(see \cite[Cor. 4.1.2]{Anderson}), by \cite[Thm. 8.1.3]{Anderson} or \cite[Thm 48.2]{Darnel}, it has a unique orthocompletion
which is denoted by $O(G_A)$.
Since $u_A$ is a strong unit of $G_A$ and $G_A\leq O(G_A)$, then $u_A$ is a positive element of $O(G_A)$
and so $B:=\Gamma(O(G_A),u_A)$ is a pseudo $MV$-algebra (see \cite[Prop. 2.1.2]{mv-book}) and clearly, $\Gamma(G_A,u_A)$ is a pseudo $MV$-subalgebra of $B$.
It follows that $A$ is isomorphic to a pseudo $MV$-subalgebra of $B$. We claim that $B$ is an orthocomplete pseudo $MV$-algebra.

(1) We assert $A$ is a large pseudo $MV$-subalgebra of $B$. First, using mathematical induction, we have if $a_1,\ldots, a_n \in A$, then $(a_1\oplus \cdots \oplus a_n)=(a_1+\cdots+a_n)\wedge u_A$. If $n=1,2$, the statements is clear. Using distributivity of the group addition $+$ with respect to $\wedge$ in the $\ell$-group, we have $(a_1\oplus a_2)\oplus a_3 = \big(((a_1+a_2)\wedge u_A)+a_3\big)\wedge u_A=(a_1+a_2+a_3)\wedge (u_A+a_3)\wedge u_A=(a_1+a_2+a_3)\wedge u_A$.

Put $b\in B$. Since $O(G_A)$ is an orthocompletion of $G_A$, $0< b\leq u_A$ and $b\in O(G_A)$,
then there exist $n\in\mathbb{N}$ and a strictly positive element $x\in G_{A}$ such that $x \leq nb$. From $0<b$, it follows $x\wedge u_A \leq (nb)\wedge u_A= n.b$,
hence $x\wedge u_A\in \Gamma(G_A,u_A)$ and $x\wedge u_A\leq n.b$, and finally  $A$ is a large pseudo $MV$-subalgebra of $B$.

(2) Let $S$ be a pairwise disjoint subset of $B$. Then clearly, $S$ is a pairwise disjoint subset of $O(G_A)$, so by the assumption,
$\bigvee S\in O(G_A)$. Since $u_A$ is an upper bound for $S$ in $O(G_A)$, $\bigvee S\in B$.

(3) Let $I$ be a polar ideal of $B$. Then there exists a subset $X$ of $B$ such that $I=X^\bot=\{b\in B\mid b\wedge x=0, \forall x\in X\}$.
Set $\overline{I}=\{g\in O(G_A)\mid |g|\wedge x=0, \forall x\in X\}$. It is easy to show that $I=\overline{I}\cap B$ and it is a polar $\ell$-subgroup of $O(G_A)$ and so there exists an $\ell$-subgroup
$J$ of $O(G_A)$ such that $O(G_A)=\overline{I}+ J$. Let $K=J\cap B$. By Theorem \ref{ideals}, $K$ is a normal ideal of $B$. Clearly, $K\cap I=\{0\}$. Let
$b\in B$. Then $0\leq b$ and there exist $b_1\in \overline{I}$ and $b_2\in J$ such that $b=b_1+b_2$.
By \cite[Prop. 1.1.3a]{Anderson} or $(\ell$3), we have $b=|b|=|b_1+b_2|\leq |b_1|+|b_2|+|b_1|$. Since in any $\ell$-group, \cite[Prop. 1.1.5]{Anderson}, for all positive elements $g,h,u$,  we have $(g+h)\wedge u\le (g\wedge u)+ (h\wedge u)$, we get %similarly to the proof of (1),
$b=b\wedge u_A\leq (|b_1|+|b_2|+|b_1|)\wedge u_A\leq (|b_1|\wedge u_A)\oplus (|b_2|\wedge u_A)\oplus (|b_1| \wedge u_A)$. Clearly,
$|b_1|\wedge u_A\in \overline{I}\cap B=I$ and $|b_2|\wedge u_A\in J\cap B=K$ so
$(|b_1|\wedge u_A)\oplus (|b_2|\wedge u_A)\oplus (|b_1| \wedge u_A)\in \langle I\cup J\rangle_n$. Hence $x\in \langle I\cup J\rangle_n$, whence
$B=I\boxplus J$. That is, $I\in \mathfrak{Sum}(B)$.

From (1), (2) and (3) it follows that $B$ is an orthocomplete pseudo $MV$-algebra. Now, we show that it is an orthocompletion for $A$.
Let $M$ be an orthocomplete pseudo $MV$-algebra such that $\Gamma(G_A,u_A)$ is a subalgebra of  $M$ and $M$ is a subalgebra of $\Gamma(O(G_A),u_A)=B$. Put $x\in B$. Then $x=[b]$ for some
$b\in L$, where $[b]$ is the congruence class of $b$ in $L$, hence by the proof of \cite[Thm 48.2, p. 313]{Darnel}, there is $\{f_\alpha,F_\alpha\}_{\alpha\in \Omega}$ underlying $b$ and
$\bigvee_{\alpha\in\Omega}[f_\alpha]$ exists and is equal to $[b]$ (we recall that in the proof of the mentioned theorem, it was proved that
$\{[f_\alpha]\}_{\alpha\in\Omega}$ is a pairwise disjoint subset of positive elements of $G_A$ that $\bigvee_{\alpha\in\Omega}[f_\alpha]=[b]\leq u_A$.
Hence  $\{[f_\alpha]\}_{\alpha\in\Omega}\s \Gamma(G_A,u_A)$). Since $\Gamma(G_A,u_A)\s M$ and $M$ is orthocomplete, $[b]\in M$ and so $M=B$. Therefore,
$B$ is an orthocompletion for $\Gamma(G_A,u_A)$. Finally, we will show that if $B_1$ is another orthocompletion for $A$, then $B\cong B_1$.
Let $B_1$ be an orthocompletion for the pseudo $MV$-algebra $A$. Then there is an injective $MV$-homomorphism $i:A\ra B_1$. We know that $\pi\circ\phi:G_A\ra
O(G_A)$ is an injective $\ell$-group homomorphism and $\pi\circ\phi:\Gamma(G_A,u_A)\ra \Gamma(O(G_A),u_A)$ is an injective pseudo $MV$-homomorphism (see the notations in Remark \ref{rmk}).
Let $\alpha:A\ra \Gamma(G_A,u_A)$ be an isomorphism of pseudo $MV$-algebras. Then $\pi\circ\phi\circ \alpha:A\ra B$ is a one-to-one pseudo $MV$-homomorphism. Since $B_1$ is
orthocomplete, then by the above results, $B_1=\Gamma(O(G_{B_1}),u_{B_1})$ (up to isomorphic image). From $A\cong i(A)\leq B_1$ it follows that
$G_A\cong G_{i(A)}\leq G_{B_1}$ and $O(G_{i(A)})\leq O(G_{B_1})$, hence
$B=\Gamma(O(G_{A}),u_{A})\cong\Gamma(O(G_{i(A)}),u_{i(A)})\leq \Gamma(O(G_{B_1}),u_{B_1})=B_1$. Moreover, $\Gamma(O(G_{i(A)}),u_{i(A)})$ is an orthocompletion for $i(A)$.
By summing up the above results, we get that $i(A)\leq \Gamma(O(G_{i(A)}),u_{i(A)})\leq B_1$. Since $B_1$ is an orthocompletion of $i(A)$,
$\Gamma(O(G_{i(A)}),u_{i(A)})= B_1$ and so $B\cong B_1$.
We must note that,  since $i(A)\leq B_1$,
then from the proof of \cite[Prop. 2.4.4]{mv-book}, we get $u_{i(A)}=u_{B_1}$.
\end{proof}

In Theorem \ref{ortho}, we used an orthocompletion of a representable $\ell$-group to construct an orthocompletion of a representable pseudo $MV$-algebra. In the next theorem,
we will show that if $(A;\oplus,^-,^\sim,0,1)$ is an orthocomplete representable pseudo $MV$-algebra such that $u_A$ is a strong unit of the $\ell$-group
$G_A$, then $G_A$ is also an orthocomplete $\ell$-group.

\begin{thm}\label{3.3}
Let $(A;\oplus,^-,^\sim,0,1)$ be an orthocomplete representable pseudo $MV$-algebra such that $u_A$ is a strong unit of the representable $\ell$-group
$O(G_A)$. Then $G_A$ is an orthocomplete $\ell$-group.
\end{thm}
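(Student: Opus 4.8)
The plan is to show that, under these hypotheses, $A$ coincides (up to isomorphism) with $\Gamma(O(G_A),u_A)$, and then to read that back at the $\ell$-group level via the functor $\Xi$. First I would reproduce the ambient construction from the proof of Theorem~\ref{ortho}: since $A$ is representable, $G_A$ is a representable $\ell$-group, so its orthocompletion $O(G_A)$ exists and we view $G_A\le O(G_A)$; by hypothesis $u_A$ is a strong unit of $O(G_A)$, hence $B:=\Gamma(O(G_A),u_A)$ is a well-defined pseudo $MV$-algebra, $A$ is (isomorphic to) a large subalgebra of $B$, and the proof of Theorem~\ref{ortho} shows that $B$ is an orthocompletion of $A$.

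The crux is to see that $A$ must be all of $B$. Identify $A$ with its copy $\Gamma(G_A,u_A)\le B$. Then $A$ is an orthocomplete pseudo $MV$-algebra with $\Gamma(G_A,u_A)\le A\le B$, so by the minimality argument inside the proof of Theorem~\ref{ortho} --- where any orthocomplete pseudo $MV$-algebra sandwiched between $\Gamma(G_A,u_A)$ and $B$ was shown to equal $B$ --- we conclude $A=B$; equivalently, $A$ is itself an orthocompletion of $A$ (it is orthocomplete and trivially a large subalgebra of itself), so $A\cong B$ by the uniqueness clause of Theorem~\ref{ortho}. In either reading, $\Gamma(G_A,u_A)\cong\Gamma(O(G_A),u_A)$.

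Next I would apply $\Xi$. Since $\Xi$ preserves isomorphisms and, by Theorem~\ref{functor}, $\Xi\Gamma$ is naturally equivalent to the identity on $\mathcal{UG}$ --- here the hypothesis that $u_A$ is a strong unit of $O(G_A)$ is exactly what places $(O(G_A),u_A)$ in $\mathcal{UG}$ --- we obtain $(G_A,u_A)\cong\Xi(\Gamma(G_A,u_A))\cong\Xi(\Gamma(O(G_A),u_A))\cong(O(G_A),u_A)$. Hence $G_A\cong O(G_A)$ as $\ell$-groups, and since $O(G_A)$ is orthocomplete (Remark~\ref{rmk}) and orthocompleteness is invariant under $\ell$-group isomorphism, $G_A$ is orthocomplete. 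A purely $\ell$-group alternative, once $A=B$ is known: $[0,u_A]$ computed in $G_A$ equals $[0,u_A]$ computed in $O(G_A)$, so every $0\le h\in O(G_A)$, being bounded by some $n.u_A$, admits a good-sequence decomposition $h=h_1+\cdots+h_n$ with each $h_i\in[0,u_A]\subseteq G_A$, forcing $O(G_A)=G_A$.

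I do not anticipate a real obstacle, only the need for careful bookkeeping: one must check that the minimality/uniqueness of the orthocompletion genuinely lets $A$ play the role of an orthocompletion of $A$, and that $\Xi$ applied to $B$ returns $(O(G_A),u_A)$ rather than some proper unital $\ell$-subgroup of $O(G_A)$ --- which is precisely what the strong-unit hypothesis on $O(G_A)$ secures.
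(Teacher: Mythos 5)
Your proposal is correct and follows essentially the same route as the paper: identify $A$ with $\Gamma(G_A,u_A)$ inside $B=\Gamma(O(G_A),u_A)$, use the orthocompleteness of $A$ together with the argument from Theorem~\ref{ortho} (that an orthocomplete algebra sandwiched between $\Gamma(G_A,u_A)$ and $B$ equals $B$) to get $A\cong B$, and then apply $\Xi$, with the strong-unit hypothesis guaranteeing $\Xi(\Gamma(O(G_A),u_A))\cong(O(G_A),u_A)$. Your closing observations (including the alternative good-sequence ending) are consistent with, but not needed beyond, the paper's own argument.
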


\begin{proof}
By Theorem \ref{functor}, we know that $A\cong \Gamma\Xi(A)=\Gamma(G_A,u_A)$. Also, $\Gamma(G_A,u_A)$ is a pseudo $MV$-subalgebra of
$\Gamma(O(G_A),u_A)$. Set $B=\Gamma(O(G_A),u_A)$. Then there is a one-to-one homomorphism of  pseudo $MV$-algebras $f:A\ra B$. Since $A$ and $B$ are orthocomplete
and $f(A)\leq B$, then by Theorem \ref{ortho}, $f(A)=B$ and so $A\cong B$. Hence by Theorem \ref{functor}, $\Xi(f): \Xi(A)\ra \Xi(B)$ is an
isomorphism. It follows that $(G_A,u_A)\cong\Xi(\Gamma(G_A,u_A))\cong\Xi(A)\cong \Xi(B)\cong \Xi(\Gamma(O(G_A),u_A))\cong (O(G_A),u_A)$
(note that, since $u_A$ is a strong unit of $O(G_A)$, then $\Xi(\Gamma(O(G_A),u_A))\cong (O(G_A),u_A)$).
Therefore, $G_A$ is orthocomplete.
\end{proof}

In Corollary \ref{3.5}, we try to find a representable pseudo $MV$-algebra $(A;\oplus,^-,^\sim,0,1)$ such that $u_A$ is a strong unit for $O(G_A)$.

A pseudo $MV$-algebra $(A;\oplus,^-,^\sim,0,1)$ is called {\it finite representable} if there exists a subdirect embedding $\alpha$ from $A$ into a finite direct product
of pseudo $MV$-chains. It is easy to see that $A$ is finite representable if there is a finite subset $S$ of prime and normal ideals of $A$ such that $\bigcap S=\{0\}$.
Similarly, we can define a finite representable $\ell$-group.

\begin{rmk}\label{3.4}
Let $(G;+,0)$ be a finite representable $\ell$-group with strong unit $u$.
Then there is a subset $\{P_1,\ldots,P_n\}$ of prime $\ell$-ideals of $G$ such that $\bigcap_{i=1}^{n}P_i=\{0\}$.
Clearly, the natural map $\varphi:G\ra \prod_{i=1}^{n}G/P_i$ sending $g$ to $\varphi(g)=(g/P_1,\ldots,g/P_n)$
is a subdirect embedding of $\ell$-groups.
We claim that $u$ is a strong unit of $O(G)$, where $O(G)$ is an orthocompletion of $G$.
Put $x\in O(G)$. Then by \cite[Thm. 8.1.3]{Anderson}, $x=[(x_1/P_1,\ldots,x_n/P_n)]$
for some $(x_1/P_1,\ldots,x_n/P_n)\in L$.
Since $(u/P_1,\ldots,u/P_n),(x_1/P_1,\ldots,x_n/P_n)\in L$ (see the notations in Remark \ref{rmk}), there are
$\{f_\alpha,F_\alpha\}_{\alpha\in A}$ and $\{g_\beta,G_\beta\}_{\beta\in B}$ that underline $(u/P_1,\ldots,u/P_n)$ and $(x_1/P_1,\ldots,x_n/P_n)$,
respectively. It follows that
$$\forall \alpha\in A,\   \forall \lambda\in F_\alpha, \ f_\alpha(\lambda)=u/P_\lambda \quad
\forall \beta\in B,\   \forall \lambda\in G_\beta, \ g_\beta(\lambda)=x_\lambda/P_\lambda.$$
Since $u$ is a strong unit of $G$, there is $m\in \mathbb{N}$ such that $x_i\leq mu$ for all $i\in\{1,2,\ldots,n\}$.
Clearly, $\{mf_\lambda,F_\lambda\}$ underlines $(mu/P_1,\ldots, mu/P_n)$ and
for all $\alpha\in A$ and $\beta\in B$ and $\lambda\in F_\alpha\cap G_\beta$, we have
$g_{\beta}(\lambda)=x_\lambda/P_\lambda\leq mu/P_\lambda=mf_\alpha(\lambda)$, which implies that $(u/P_1,\ldots,u/P_n)$
is a strong unit of $O(G)$.
\end{rmk}

\begin{cor}\label{3.5}
Let $(M;\oplus,^-,^\sim,0,1)$ be a finite representable pseudo $MV$-algebra. Then $u_M$ is a strong unit of $\Xi(M)=(G_M,u_M)$ and it
is an orthocomplete representable  $\ell$-group.
\end{cor}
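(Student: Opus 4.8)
The plan is to read the two assertions separately and to pin down precisely which $\ell$-group carries the orthocompleteness. First I would dispatch the strong-unit clause: by Theorem \ref{functor}, $\Xi(M)=(G_M,u_M)$ is a unital $\ell$-group with $M\cong\Gamma(G_M,u_M)$, so $u_M$ is by construction a strong unit of $G_M$, and since $M$ is representable so is $G_M$ (as recorded right after Theorem \ref{functor}). Next I would transport the finiteness hypothesis downward: ``$M$ finite representable'' means there is a finite family of prime normal ideals of $M$ with zero intersection, and through the spectral correspondence of Theorem \ref{ideals} this yields a finite family $P_1,\dots,P_n$ of prime $\ell$-ideals of $G_M$ with $\bigcap_{i=1}^{n}P_i=\{0\}$. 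Thus $(G_M,u_M)$ is a finite representable $\ell$-group with strong unit, i.e. exactly the setting of Remark \ref{3.4}.

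For the orthocompleteness assertion I would first establish the fact that the sequel actually uses, namely applying Remark \ref{3.4} to $(G_M,u_M)$ to get that $u_M$ is a strong unit of the orthocompletion $O(G_M)$; since $G_M$ is representable, $O(G_M)$ is built as in Remark \ref{rmk} from the same totally ordered quotients, hence is again representable, and by its very construction it is orthocomplete. If instead one insists on reading ``it'' as $(G_M,u_M)$ itself, the natural route is Theorem \ref{3.3}: its hypotheses are ``$M$ orthocomplete representable'' and ``$u_M$ a strong unit of $O(G_M)$,'' and the latter has just been secured, so it remains only to verify that $M$ is orthocomplete.

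To check that $M$ is orthocomplete I would argue as follows. Lateral completeness comes for free: pairwise disjoint nonzero elements have pairwise disjoint supports among the $n$ chains, so any disjoint subset of $M$ has at most $n$ elements and its finite join is its least upper bound. Strong projectability is the delicate ingredient; I would attempt an \emph{isolation} argument, passing to an irredundant subfamily of $P_1,\dots,P_n$ and using the strong unit to show that each coordinate component of a given element lies back in $G_M$, whence $G_M=\bigoplus_{i} K_i$ with $K_i=\bigcap_{j\neq i}P_j$ a totally ordered group; a finite cardinal product of chains is manifestly strongly projectable, and then Theorem \ref{3.3} would deliver $G_M$ orthocomplete.

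The hard part will be exactly this isolation step, and I expect it to require a genuine hypothesis (for instance Archimedeanity of the chains $G_M/P_i$): the multiples used to dominate a given component need not exhaust a non-Archimedean chain, and indeed a finite representable unital $\ell$-group can fail to be strongly projectable, a polar failing to be a cardinal summand. In that case $(G_M,u_M)$ itself is not orthocomplete and only its orthocompletion $O(G_M)$ is. The robust and hypothesis-free conclusion is therefore the one obtained in the second paragraph: $u_M$ is a strong unit of $O(G_M)$, and $O(G_M)$ is an orthocomplete representable $\ell$-group, which is precisely the pair of facts fed into Theorem \ref{3.3}. The only substantive work is Remark \ref{3.4}; all else is transfer through the functor $\Xi$ and the construction of Remark \ref{rmk}.
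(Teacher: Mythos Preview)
Your route is exactly the paper's: pass from $M$ to $G_M$ via Theorem~\ref{ideals} to get a finite representable unital $\ell$-group, invoke Remark~\ref{3.4} to see that $u_M$ is a strong unit of $O(G_M)$, and then call Theorem~\ref{3.3}. The paper's proof is literally those three sentences (plus a citation of \cite[Prop.~48.1]{Darnel} for representability). So there is no divergence of method.

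Where you go beyond the paper is in noticing that Theorem~\ref{3.3} has the standing hypothesis ``$A$ is orthocomplete,'' and that this is neither assumed in Corollary~\ref{3.5} nor supplied anywhere in the argument. You are right to be suspicious, and your instinct that Archimedeanity of the chain quotients is the missing ingredient is exactly on target. Here is a finite representable pseudo $MV$-algebra that is \emph{not} projectable, hence not orthocomplete: take $G=\mathbb{Z}\,\overrightarrow{\times}\,\mathbb{Z}^2$ (lexicographic product, the first copy of $\mathbb{Z}$ dominant) with strong unit $u=(1,0,0)$, and set $M=\Gamma(G,u)$. The two kernels $P_1=\{(0,0,z)\}$ and $P_2=\{(0,y,0)\}$ are prime $\ell$-ideals with $P_1\cap P_2=\{0\}$, so $G$ (and hence $M$) is finite representable with $n=2$. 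For $a=(0,1,0)\in M$ one computes $a^{\bot}=\{(0,0,c):c\ge 0\}$ and $a^{\bot\bot}=\{(0,b,0):b\ge 0\}$, and the top element $(1,0,0)$ cannot be written as $u\oplus v$ with $u\in a^{\bot}$, $v\in a^{\bot\bot}$, since any such sum stays in the infinitesimal layer $\{(0,b,c)\}$. Thus $M$ is not projectable, Theorem~\ref{3.3} does not apply, and indeed $G_M\cong G$ itself is not orthocomplete.

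So the gap you flag is genuine and is present in the paper's own proof: as written, Corollary~\ref{3.5} (reading ``it'' as $G_M$) is false. The salvageable content is precisely what you isolate in your second paragraph --- Remark~\ref{3.4} gives that $u_M$ is a strong unit of $O(G_M)$, and $O(G_M)$ is orthocomplete and representable --- which is also what the surrounding text (the sentence introducing the corollary and the first clause of its statement) seems to be reaching for.
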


\begin{proof}
Since $M$ is a finite representable pseudo $MV$-algebra, by Theorem \ref{ideals}, it is clear that $G_A$ is a finite representable
$\ell$-group, hence by Remark \ref{3.4}, $(O(G_A),u_A)$ is a unital $\ell$-group. So by Theorem \ref{3.3}, $\Xi(M)=(G_M,u_M)$ is an orthocomplete  $\ell$-group. Therefore, by {\rm \cite[Prop. 48.1]{Darnel}}, it is representable.
\end{proof}

\begin{lem}\label{lem1}
Let $G$ be an $\ell$-subgroup of an $\ell$-group $H$ and $u\in G$ be a strong unit of $H$. If the pseudo $MV$-algebra $\Gamma (G,u)$ is a large pseudo
$MV$-subalgebra of $\Gamma(H,u)$, then $G$ is a large $\ell$-subgroup of $H$.
\end{lem}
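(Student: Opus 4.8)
The plan is to unwind the definition of "large $\ell$-subgroup" and reduce it to the corresponding statement about the pseudo $MV$-algebras, which is given by hypothesis. Recall that $G$ is a large $\ell$-subgroup of $H$ means: for every $h \in H$ with $h > 0$, there exist $n \in \mathbb{N}$ and $g \in G$ with $g > 0$ such that $g \le n h$. So I would fix an arbitrary $h \in H$ with $h > 0$ and try to produce such a $g$.

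First I would reduce to the case where $h$ is bounded above by $u$. Since $u$ is a strong unit of $H$, there is $k \in \mathbb{N}$ with $h \le k u$; then $h' := h \wedge u$ satisfies $0 < h' \le u$, so $h' \in \Gamma(H,u)$, and moreover $h' \le h$. If I can find $m \in \mathbb{N}$ and $g \in G$, $g > 0$, with $g \le m h'$ inside $H$, then $g \le m h' \le m h$ and we are done, so it suffices to treat $h' \in \Gamma(H,u) \setminus \{0\}$. Now apply the hypothesis that $\Gamma(G,u)$ is a large pseudo $MV$-subalgebra of $\Gamma(H,u)$: there are $n \in \mathbb{N}$ and $x \in \Gamma(G,u) \setminus \{0\}$ with $x \le n.h'$, where $n.h'$ is the pseudo $MV$-algebra iterated sum, i.e. $n.h' = (n h') \wedge u \le n h'$ computed in $H$ (this identity, or its obvious induction, is already used in the proof of Theorem~\ref{ortho}). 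Since $x \in \Gamma(G,u) \subseteq G$ and $x > 0$, setting $g := x$ gives $g \in G$, $g > 0$, and $g = x \le n.h' \le n h' \le n h$, which is exactly what is required.

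The only point needing a line of care is the inequality $n.h' \le n h'$ in $H$ — that is, that the truncated iterated $\oplus$-sum lies below the honest $\ell$-group sum. This follows because $a \oplus b = (a+b)\wedge u \le a + b$ for $a,b \in [0,u]$ with $a,b \ge 0$, and then by an immediate induction on $n$, exactly as spelled out in part (1) of the proof of Theorem~\ref{ortho}; I would simply cite that computation rather than repeat it. There is no real obstacle here: the lemma is a routine "transfer" statement, and the main content — the existence of the small positive element $x$ dominated by a multiple of the given element — is precisely the largeness hypothesis on the pseudo $MV$-algebras, with the only work being the cosmetic passage between $n h$, $h \wedge u$, and $n.(h\wedge u)$.
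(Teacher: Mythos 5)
Your proposal is correct and follows essentially the same route as the paper's proof: reduce to $h\wedge u \in \Gamma(H,u)\setminus\{0\}$, apply the largeness hypothesis there, and conclude via $x \le n.(h\wedge u) = \bigl(n(h\wedge u)\bigr)\wedge u \le n(h\wedge u) \le nh$. The only step you assert without justification is that $h\wedge u > 0$; the paper gets this from the strong-unit bound $h\le ku$ via the $\ell$-group inequality $h = h\wedge (ku) \le k(h\wedge u)$, which is exactly the one-line computation your ``then'' elides.
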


\begin{proof}
Put $0<h\in H$. Then $u\wedge h\in \Gamma(H,u)$. If $u\wedge h=0$, then (since $u$ is a strong unit) there exists $n\in\mathbb{N}$ such that
$h \leq nu$ and so $h = h\wedge (nu)\leq n(h\wedge u)=0$ which is a contradiction and so $h\wedge u\neq 0$.
%
%$h\leq \overbrace{u\oplus\cdots \oplus u}^{n\ times} $ and so $h=h\wedge (\overbrace{u\oplus\cdots \oplus u}^{n\ times})\leq
% \overbrace{(h\wedge u)\oplus\cdots \oplus (h\wedge u)}^{n\ times}=0$, which % is a contradiction and so $h\wedge u\neq 0$.
%
By the assumption, there are $m\in\mathbb{N}$ and $x\in \Gamma(G,u)-\{0\}$ such that $x \leq m.h \leq mh$. %$x\leq \overbrace{h\oplus\cdots \oplus h}^{m\ times}\leq
%\overbrace{h+\cdots + h}^{m\ times} $.
Therefore, $G$ is a large $\ell$-subgroup of $H$ (equivalently, $H$ is an essential extension for $G$).
\end{proof}

\begin{thm}\label{lem2}
If a pseudo $MV$-algebra $(M;\oplus,^-,^\sim,0,1)$ is an essential extension for a pseudo $MV$-algebra $A$, then the unital $\ell$-group $(G_M,u_M)$ is an essential extension for the $\ell$-group $(G_A,u_A)$.
\end{thm}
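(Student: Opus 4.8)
The plan is to transport the situation through the functor $\Xi$ and then invoke Lemma \ref{lem1}, so that essentially no new work is needed beyond bookkeeping with the categorical equivalence.

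First I would set $\Xi(A)=(G_A,u_A)$ and $\Xi(M)=(G_M,u_M)$, so that by Theorem \ref{functor} we have natural isomorphisms $A\cong\Gamma(G_A,u_A)$ and $M\cong\Gamma(G_M,u_M)$. Let $\iota:A\hookrightarrow M$ be the inclusion, which is a morphism in $\mathcal{PMV}$. Applying $\Xi$ gives a morphism of unital $\ell$-groups $f:=\Xi(\iota):(G_A,u_A)\to(G_M,u_M)$, and by Theorem \ref{functor}(ii) the map $f$ is injective because $\iota$ is. Since $f$ is a morphism in $\mathcal{UG}$, it preserves the fixed strong units, i.e. $f(u_A)=u_M$. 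Identifying $G_A$ with its image $f(G_A)$, we may therefore assume that $G_A$ is an $\ell$-subgroup of $G_M$, that $u_A=u_M=:u$, and (using that $\Gamma\Xi$ is naturally equivalent to the identity) that the inclusion $\Gamma(G_A,u)\hookrightarrow\Gamma(G_M,u)$ corresponds under the natural isomorphisms to $\iota:A\hookrightarrow M$.

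Next I would observe that largeness is intrinsic to the pseudo $MV$-algebra structure: the condition ``for each $y\ne 0$ there are $n\in\mathbb{N}$ and $x\ne 0$ in the subalgebra with $x\le n.y$'' is phrased purely in terms of $\oplus$ and the induced order, hence it is preserved by the natural isomorphisms above. Consequently $\Gamma(G_A,u)$ is a large pseudo $MV$-subalgebra of $\Gamma(G_M,u)$. Since $u=u_A\in G_A$ is a strong unit of $G_M$, the hypotheses of Lemma \ref{lem1} hold with $G:=G_A$ and $H:=G_M$; that lemma then yields that $G_A$ is a large $\ell$-subgroup of $G_M$, equivalently that $(G_M,u_M)$ is an essential extension for $(G_A,u_A)$, as claimed.

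The proof has no genuine obstacle; the only points to be careful about are that the lifted morphism really sends $u_A$ to $u_M$ (immediate, since morphisms in $\mathcal{UG}$ preserve strong units) and that the ``large subalgebra'' property survives the transfer along the natural isomorphisms of $\Gamma\Xi$ with the identity (clear from its definition). Once these identifications are in place, the whole content of the theorem is already contained in Lemma \ref{lem1}.
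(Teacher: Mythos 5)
Your proof is correct and follows essentially the same route as the paper: apply $\Xi$ to the inclusion, use the natural equivalence $\Gamma\Xi\cong \mathrm{id}$ to see that $\Gamma(G_A,u_A)$ is a large pseudo $MV$-subalgebra of $\Gamma(G_M,u_M)$, and then invoke Lemma \ref{lem1}. The only difference is that you spell out explicitly why the strong units are identified under $\Xi(\iota)$ and why largeness survives the transfer along the natural isomorphisms, points the paper leaves implicit in its commutative diagram.
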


\begin{proof}
Let $M$ be an essential extension for the pseudo $MV$-algebra $A$. By Theorem \ref{functor}, we have
$A\cong \Gamma(G_A,u_A)$ and $M\cong\Gamma(G_M,u_M)$ and the following diagram are commutative.
$$\begin{array}{ll}
\begin{tabular}{ccc}
A& $\xrightarrow{ \quad  f \quad  }$ & M\\
$\downarrow$ & & $\downarrow$ \\
$\Gamma(\Xi (A))$& $\xrightarrow{\Gamma(\Xi (f))}$ & $\Gamma(\Xi (M))$
\end{tabular}
\end{array}$$
It follows that $\Gamma(G_M,u_M)=\Gamma(\Xi (M))$ is an essential extension for
$\Gamma(G_A,u_A)=\Gamma(\Xi (A))$ and hence by Lemma \ref{lem1}, $G_M$ is an essential extension for
the $\ell$-group $G_A$.
\end{proof}

\begin{rmk}\label{3.10}
Let a representable $\ell$-group $(H;+,0)$ be an essential extension for an $\ell$-group $G$, then also $G$ is representable because representable $\ell$-groups form a variety, \cite[p. 304]{Darnel},
and let $\mathfrak{D}(G)$
and $\mathfrak{D}(H)$ be the set of maximal disjoint subsets of
$\rho(G)$ and $\rho(H)$, respectively. By \cite[Thm 8.1.1]{Anderson}, these
lattices are isomorphic, under the maps $\Phi:\rho(H)\ra \rho(G)$
and $\Psi:\rho(G)\ra \rho(H)$, define by $\Phi(I)=I\cap G$ and
$\Psi(J)=(J^{\bot_G})^{\bot_H}$ for all $I\in \rho(H)$ and $J\in \rho(G)$.
It can be easily seen that $\Phi$ and $\Psi$ can be extend to isomorphisms
between $\mathfrak{D}(G)$ and $\mathfrak{D}(H)$. In fact,
$\mathfrak{D}(G)=\{\Phi(S)\mid S\in \mathfrak{D}(H)\}$. Put $S\in \mathfrak{D}(H)$.
For each $I\in S$, define $\mu_{_I}: G/(I\cap G)^{\bot_G}\ra H/I^{\bot_H}$, by
$\mu_{_I}(x+(I\cap G)^{\bot_G})=x+I^{\bot_H}$.

(1) If $x,y\in G$ such that $x+(I\cap G)^{\bot_G}=y+(I\cap G)^{\bot_G}$, then $x-y\in (I\cap G)^{\bot_G}$, so
$$I=\Psi\circ\Phi(I)=(I\cap G)^{\bot_G\bot_H}\s (x-y)^{\bot_H}\Rightarrow x-y\in (x-y)^{\bot_H\bot_H}\s I^{\bot_H}.$$
It follows that $x+I^{\bot_H}=y+I^{\bot_H}$.

(2) Clearly, $\mu_{_I}$ is an $\ell$-group homomorphism. Moreover, $\mu_{_I}(x+(I\cap G)^{\bot_G})=0+I^{\bot_H}$ implies that
$$x\in  I^{\bot_H} \Rightarrow I\s x^{\bot_H}\Rightarrow I\cap G\s x^{\bot_H}\cap G=x^{\bot_G}\Rightarrow x\in (I\cap G)^{\bot_G}$$
so $\mu_{_I}$ is a one-to-one $\ell$-group homomorphism.

Define $\mu_{_S}:\prod_{I\in S}G/(I\cap G)^{\bot_G}\ra\prod_{I\in S} H/I^{\bot_H}$, by
$\mu_{_S}\big( (x_{_I}+(I\cap G)^{\bot_G})_{_{I\in S}}\big)=(x_{_I}+I^{\bot_H})_{_{I\in S}}$.
From (1) and (2), we get that $\mu_{_S}$ is a one-to-one $\ell$-group homomorphism.
For each $S\in \mathfrak{D}(H)$, set
$G_S=\prod_{I\in S}G/(I\cap G)^{\bot_G}$ and $H_S=\prod_{I\in S}H/I^{\bot_H}$.
Now, let $S,T\in\mathfrak{D}(H)$ such that $S\leq T$
(that is, each $I\in S$, is contained in some $J\in T$). Then the natural map $\pi_{T,S}:G_T\ra G_S$ is an $\ell$-group homomorphism and
by \cite[Thm. 2.6]{Conrad1}, $O(G)$ ($O(H)$) is a direct limit of the family $\{G_S,\pi^G_{_{T,S}}\}_{S\leq T\in \mathfrak{D}(H)}$
($\{H_S,\pi^{H}_{_{T,S}}\}_{S\leq T\in \mathfrak{D}(H)}$), $O(G)$ $(O(H))$
is the orthocompletion of $G$ ($H$), and
$\mu_{_S}:\{G_S,\pi^{G}_{_{T,S}}\}_{S\leq T\in \mathfrak{D}(H)}\ra
\{H_S,\pi^{H}_{_{T,S}}\}_{S\leq T\in \mathfrak{D}(H)}$
is a morphism between these directed systems.
Similarly to the first step of the proof of \cite[Thm. 3.5]{Conrad2}, there is a one-to-one $\ell$-group homomorphism $\mu$
induced by $\{\mu_{_S}\}_{S\in \mathfrak{D}(H)}$ such that the following  diagram is commutative:
\begin{eqnarray}\label{dig 1}
\begin{array}{ll}
\begin{tabular}{ccc}
$G$& $\xrightarrow{ \quad  \s \quad  }$ & $H$\\
$\downarrow{\alpha}$ & & $\downarrow{\beta}$ \\
$O(G)$& $\xrightarrow{\quad \mu \quad }$ & $O(H)$
\end{tabular}
\end{array}
\end{eqnarray}
where $\alpha$ and $\beta$ are the natural one-to-one $\ell$-group homomorphisms introduced in  \cite[Thm. 3.5]{Conrad2}. Moreover,
$O(G)$ and $O(H)$ are orthocompletions of $Im(\alpha)$ and $Im(\beta)$, respectively.
\end{rmk}

In the next theorem, we use an orthocompletion for a representable pseudo $MV$-algebra to show that, for each representable pseudo
$MV$-algebra $M$, a minimal strongly projectable essential extension for $M$ exists.

\begin{thm}\label{3.11}
Let $(M;\oplus,^-,^\sim,0,1)$ be a minimal strongly projectable essential extension for a representable pseudo $MV$-algebra $A$ and $B$ be an orthocompletion for the pseudo $MV$-algebra $A$. If $D$ is the intersection of all projectable pseudo $MV$-subalgebras of $B$ containing $A$, then $M\cong D$.
\end{thm}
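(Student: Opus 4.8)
The plan is to show that $D$ is, up to isomorphism, a minimal strongly projectable essential extension of $A$, and that any two such extensions coincide; together these force $M\cong D$. By Theorem~\ref{ortho} an orthocompletion of $A$ exists, and from its construction in the proof of that theorem we may take $B=\Gamma(O(G_A),u_A)$, so that $A\cong\Gamma(G_A,u_A)$ is a large pseudo $MV$-subalgebra of $B$ and $B$, being orthocomplete, is strongly projectable. Since $B$ itself belongs to the family of projectable subalgebras of $B$ containing $A$, the algebra $D$ is a well-defined pseudo $MV$-subalgebra with $A\le D\le B$; and since $A$ is large in $B$, it is large in $D$, so $D$ is an essential extension of $A$.

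Next I would show $D$ is strongly projectable, along the lines of the proof of Theorem~\ref{3.9}. Let $I$ be a polar ideal of $D$. For every projectable pseudo $MV$-subalgebra $C$ with $A\le C\le B$, both $D$ and $C$ are large in $B$ (Lemma~\ref{3.7}); inside the strongly projectable $B$ the polar $(I^{\bot_D})^{\bot_B}$ is a summand-ideal $\downarrow_{B}a$ with $a\in B(B)$ (by Corollary~\ref{3.2.1}), and $I=(I^{\bot_D})^{\bot_B}\cap D$. The point is to show $a\in D$. Using the projectability of each such $C$ together with Lemmas~\ref{3.6}, \ref{3.7} and Corollary~\ref{3.8}, one checks that the Boolean generator of the $C$-polar $(I^{\bot_D})^{\bot_C}$ lies in $C$ and that its bipolar in $B$ equals $(I^{\bot_D})^{\bot_B}=\downarrow_{B}a$; since the Boolean generator of a polar of the strongly projectable $B$ is unique, this generator must be $a$ itself, so $a\in C$ for every such $C$, i.e.\ $a\in\bigcap_{C}C=D$. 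Then $\bigcap_{C}\downarrow_{C}a=\downarrow_{D}a$ while $\bigcap_{C}(I^{\bot_D})^{\bot_C}=I$, whence $I=\downarrow_{D}a$, and by Corollary~\ref{3.2.1} every polar ideal of $D$ is a summand-ideal, i.e.\ $D$ is strongly projectable. (Equivalently, one identifies $D$ with the intersection of the strongly projectable subalgebras of $B$ containing $A$ and invokes Theorem~\ref{3.9} directly.)

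For minimality and uniqueness, let $N$ be any strongly projectable essential extension of $A$; then $N$ is projectable, hence representable. By Theorem~\ref{lem2}, $(G_N,u_N)$ is an essential extension of $(G_A,u_A)$ with $u_N=u_A$, so by Remark~\ref{3.10} together with the uniqueness of the orthocompletion (Remark~\ref{rmk}) the orthocompletion of $G_N$ is identified with $O(G_A)$ over $G_A$; passing to the intervals $[0,u_A]$ gives an embedding $N\cong\Gamma(G_N,u_A)\hookrightarrow\Gamma(O(G_A),u_A)=B$ extending $A\le B$. Thus $N$ is (isomorphic to) a projectable pseudo $MV$-subalgebra of $B$ containing $A$, whence $D\le N$. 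Taking $N=M$ we obtain $A\le D\le M$ with $D$ strongly projectable and essential over $A$; minimality of $M$ forces $M=D$, so $M\cong D$. The same argument shows that $D$ is itself minimal among strongly projectable essential extensions of $A$.

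I expect the second step — strong projectability of $D$ — to be the main obstacle: one must be certain that the Boolean generators produced in the various projectable subalgebras $C$ are literally one and the same element and that it lands inside the intersection $D$, the leverage being that they all pin down the single polar $(I^{\bot_D})^{\bot_B}$ of the fixed orthocomplete, hence strongly projectable, algebra $B$, whose Boolean generator is unique (and $B(B)$ is a complete Boolean algebra). A secondary point to verify carefully is that $u_N=u_A$ and that essential extensions of $A$ share the same orthocompletion $B$, both of which are used in the minimality argument.
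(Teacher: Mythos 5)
Your overall skeleton --- show that $D$ is a strongly projectable essential extension of $A$ sitting inside $B$, then play it off against $M$ by minimality --- is the same as the paper's, and your first two paragraphs are essentially sound: largeness of $A$ in $B$ passes to $D$, and strong projectability of $D$ is exactly Theorem \ref{3.9} (modulo the paper's own slippage between ``projectable'' and ``strongly projectable'', which you rightly flag). The genuine gap is in your third paragraph. You claim that for \emph{any} strongly projectable essential extension $N$ of $A$ the orthocompletion $O(G_N)$ is ``identified with $O(G_A)$ over $G_A$'', so that $N$ embeds into $B=\Gamma(O(G_A),u_A)$ over $A$ and hence contains a copy of $D$. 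Remark \ref{3.10} only provides an embedding $\mu\colon O(G_A)\to O(G_N)$, not an isomorphism, and the identification is false in general: take $G_A=\mathbb{Z}\le \mathbb{Q}=G_N$, a large extension of totally ordered (hence already orthocomplete) groups, so $O(G_A)=\mathbb{Z}\ne\mathbb{Q}=O(G_N)$. Correspondingly, $N=\Gamma(\mathbb{Q},2)$ is a strongly projectable essential extension of $A=\Gamma(\mathbb{Z},2)$ which certainly does not embed into $B=\Gamma(O(\mathbb{Z}),2)=A$ over $A$. So the assertion ``every strongly projectable essential extension of $A$ contains $D$'' is not true, and your derivation of $D\le M$ collapses at this point; uniqueness of the orthocompletion (Remark \ref{rmk}, Theorem \ref{ortho}) cannot be invoked here because $O(G_N)$ is an orthocompletion of $G_N$, not of $G_A$.

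What rescues the theorem --- and what the paper actually does --- is to invoke the minimality of $M$ \emph{before} attempting to pull $M$ back into $B$. One embeds both $B$ (via $\Gamma(\mu)$, with image $M_1$) and $M$ (via $g$, with image $M_2$) into the common algebra $\Gamma(O(G_M),u_M)$; by Remark \ref{3.9.1} the intersection $M_1\cap M_2$ is a strongly projectable essential extension of the image of $A$ contained in $M_2\cong M$, so minimality of $M$ forces $M_2=M_1\cap M_2\subseteq M_1$. Only after this step does $M\cong\Gamma(\mu)^{-1}(M_2)$ become a strongly projectable subalgebra of $B$ containing $A$, whence it contains $D$, and minimality of $M$ (together with $D$ being itself a strongly projectable essential extension of $A$) yields equality. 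If you wish to keep your structure, you must replace the false ``$O(G_N)\cong O(G_A)$ over $G_A$'' step by this intersection argument inside $O(G_M)$, applied specifically to the minimal extension $M$ rather than to an arbitrary $N$.
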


\begin{proof}
Let $i:A\ra M$ be the inclusion map. Then by Theorem \ref{functor}, $\Xi(i):(G_A,u_A)\ra (G_M,u_M)$ is an injective homomorphism of unital $\ell$-groups.
Since $A\cong\Gamma(G_A,u_A)$ and $M\cong\Gamma(G_M,u_M)$, then $\Gamma(G_M,u_M)$ is an essential extension
for the pseudo $MV$-algebra $\Gamma(G_A,u_A)$ and so by Lemma \ref{lem1}, $G_M$ is an essential extension  for $G_A$. By Remark \ref{3.10}, we have the following commutative diagram:
\begin{eqnarray}\label{dig 12}
\begin{array}{ll}
\begin{tabular}{ccc}
$G_A$& $\xrightarrow{ \quad \Xi(i) }$ & $G_M$\\
$\downarrow{\alpha}$ & & $\downarrow{\beta}$ \\
$O(G_A)$& $\xrightarrow{\quad \mu \quad }$ & $O(G_M)$
\end{tabular}
\end{array}
\end{eqnarray}
Now, we apply the functor $\Gamma$ and we get the commutative diagram
\begin{eqnarray}\label{dig 3}
\begin{array}{ll}
\begin{tabular}{ccc}
$\Gamma(G_A,u_A)$& $\xrightarrow{ \quad  \Gamma(\Xi(i)) \quad  }$ & $\Gamma(G_M,u_M)$\\
$\downarrow{\Gamma(\alpha)}$ & & $\downarrow{\Gamma(\beta)}$ \\
$\Gamma(O(G_A),u_A)$& $\xrightarrow{\quad \Gamma(\mu) \quad }$ & $\Gamma(O(G_M),u_M)$
\end{tabular}
\end{array}
\end{eqnarray}
Hence, Theorem \ref{functor} implies that there are one-to-one pseudo $MV$-homomorphisms $f:A\ra \Gamma(O(G_A),u_A)$ and
$g: M\ra \Gamma(O(H_A),u_M)$ such that the following diagram commutes.
\begin{eqnarray}\label{dig 4}
\begin{array}{ll}
\begin{tabular}{ccc}
$A$& $\xrightarrow{ \quad  \s \quad  }$ & $M$\\
$\downarrow{f}$ & & $\downarrow{g}$ \\
$\Gamma(O(G_A),u_A)$& $\xrightarrow{\quad \Gamma(\mu) \quad }$ & $\Gamma(O(G_M),u_M)$
\end{tabular}
\end{array}
\end{eqnarray}
By Theorem \ref{ortho}, $\Gamma(O(G_A),u_A)$ and $\Gamma(O(H_A),u_M)$ are orthocompletions of $A$ and $M$, respectively.
Since $\mu$, $\alpha$ and $\beta$ are one-to-one, by Theorem \ref{functor}, $\Gamma(\mu)$, $\Gamma(\alpha)$ and
$\Gamma(\beta)$ are one-to-one.  Since $M$ and $\Gamma(O(G_A),u_A)$ are strongly projectable pseudo $MV$-algebras and $g$ and $\Gamma(\mu)$ are
one-to-one homomorphisms, then $M_1:=\Gamma(\mu)\big(\Gamma(O(G_A),u_A) \big)$ and $M_2:=g(M)$ are strongly projectable pseudo $MV$-subalgebra
of $\Gamma(O(G_M),u_M)$. It follows that $\Gamma (\mu)\circ f (A)\s M_1\cap M_2\s \Gamma(O(G_M),u_M)$.
Since $M_2$ is an essential extension for $\Gamma (\mu)\circ f (A)$ and $\Gamma(O(G_M),u_M)$ is an essential extension for
$M_2$, it can be easily shown that $\Gamma(O(G_M),u_M)$ is an essential extension for $(\Gamma (\mu)\circ f) (A)$, so by Remark \ref{3.9.1},
$M_1\cap M_2$ is a strongly projectable subalgebra of  $\Gamma(O(G_M),u_M)$ containing $(\Gamma (\mu)\circ f) (A)$. Hence, by
the assumption, $M_1\cap M_2=M_2$ (since $M_2$ is a minimal strongly projectable essential extension for $(\Gamma (\mu)\circ f) (A)$)
so, $M_2\s M_1$. It follows that $M\cong M_2\cong (\Gamma(\mu))^{-1}(M_2)\leq \Gamma(O(G_A),u_A)$ is a strongly projectable pseudo $MV$-subalgebra of
$\Gamma(O(G_A),u_A)$  and so $(\Gamma(\mu))^{-1}(M_2)=D$ (since $M$ is a minimal strongly projectable essential extension for $A$).
Therefore, $D\cong M_2\cong M$.
\end{proof}

\section{Conclusion}

In the paper we have studied summand-ideals of a pseudo $MV$-algebra $M$. We have showed that every such an ideal is principal corresponding to a unique Boolean element of $M$. This enables us to define projectable and strongly projectable pseudo $MV$-algebras in a similar way as it was done for $\ell$-groups. Every projectable pseudo $MV$-algebra is representable, i.e., it is a subdirect product of linearly ordered pseudo $MV$-algebras. The main results concern orthocomplete representable pseudo $MV$-algebras and their orthocompletion, Theorem \ref{ortho}. In Theorem \ref{3.11}, it was shown that, for each representable pseudo $MV$-algebra, a minimal strongly projectable essential extension for it does exist.

Since every $MV$-algebra is representable, all results concerning orthocompletion are true also for $MV$-algebras.

%%%%%%%%%%%%%%%%%%%%%%%%%%%%%%%%%%%%%%%%%%
%\begin{eqnarray*}

%\end{eqnarray*}
%$\big(\Big(\bigg( 2\bigg)\Big)\big)$

%\begin{enumerate}

%\end{enumerate}

%\begin{eqnarray*}

%\end{eqnarray*}

\end{document}